\newtheorem{theorem}{Theorem}
\newtheorem{definition}{Definition}
\newtheorem{lemma}{Lemma}[section]
\newtheorem{proposition}[lemma]{Proposition}
\newtheorem*{proposition*}{Proposition}
\newtheorem{corollary}[theorem]{Corollary}
\newtheorem*{corollary*}{Corollary}
\newtheorem{assumption}{Assumption}
\theoremstyle{definition}
\newtheorem{remark}[lemma]{Remark}
\newtheorem{example}[lemma]{Example}
\newcommand{\vertiii}[1]{{\left\vert\kern-0.25ex\left\vert\kern-0.25ex\left\vert #1 \right\vert\kern-0.25ex\right\vert\kern-0.25ex\right\vert}}
\newcommand*{\rom}[1]{\expandafter\@slowromancap\romannumeral #1@}
\newcommand{\opT}{\mathcal{T}}
\newcommand{\tdz}{\tilde{z}}
\DeclareMathOperator*{\argmin}{arg\,min}
\newcommand{\dif}{\mathop{}\!\mathrm{d}}
\newcommand{\R}{\mathbb{R}}
\newcommand{\Lap}{\mathrm{L}_x}
\newcommand{\eps}{\epsilon}
\newcommand{\abs}[1]{\lvert#1\rvert}
\newcommand{\norm}[1]{\lVert#1\rVert}
\renewcommand{\Re}{\mathfrak{Re}}
\newlength{\leftstackrelawd}
\newlength{\leftstackrelbwd}
\def\leftstackrel#1#2{\settowidth{\leftstackrelawd}%
{${{}^{#1}}$}\settowidth{\leftstackrelbwd}{$#2$}%
\addtolength{\leftstackrelawd}{-\leftstackrelbwd}%
\leavevmode\ifthenelse{\lengthtest{\leftstackrelawd>0pt}}%
{\kern-.5\leftstackrelawd}{}\mathrel{\mathop{#2}\limits^{#1}}}
\def\bigl{\mathopen\big}
\def\bigr{\mathclose\big}
\newcommand{\opL}{\mathcal{L}}
\newcommand{\oplnh}{\mathcal{L}_{\mathrm{NH}}}
\newcommand{\Lv}{\mathrm{L}_v}
\newcommand{\calX}{\mathcal{X}}
\newcommand{\calV}{\mathcal{V}}
\newcommand{\rel}   {\mathrm{rel}}
\DeclareMathOperator{\gap}  {gap}
\DeclareMathOperator{\spec} {spec}
\title{Hypocoercivity meets lifts}
\author{Giovanni Brigati}
\address{Institute of Science and Technology Austria, Am Campus 1, Klosterneuburg, 3400 Austria}
\email{giovanni.brigati@ist.ac.at}
\author{Francis L\"orler}
\address{Institute for Applied Mathematics, University of Bonn. Endenicher Allee 60, 53115 Bonn, Germany}
\email{loerler@uni-bonn.de}
\author{Lihan Wang} \address{Department of Mathematical Sciences, Carnegie Mellon University, Pittsburgh, PA, 15213 USA}
\email{lihanw@andrew.cmu.edu}
\date{\today}
\begin{document}

\begin{abstract}
  We unify the variational hypocoercivity framework established by D.~Albritton, S.~Armstrong, J.-C.~Mourrat, and M.~Novack \cite{albritton2024variational}, with the notion of second-order lifts of reversible diffusion processes, recently introduced by A.~Eberle and the second author \cite{eberle2024non}. We give an abstract, yet fully constructive, presentation of the theory, 
  so that it can be applied to a large class of linear kinetic equations. As this hypocoercivity technique does not twist the reference norm, we can recover accurate and sharp convergence rates in various models. Among those, adaptive Langevin dynamics (ALD) is discussed in full detail and we show that for near-quadratic potentials, with suitable choices of parameters, it is a near-optimal second-order lift of the overdamped Langevin dynamics. As a further consequence, we observe that the Generalised Langevin Equation (GLE) is a also a second-order lift, as the standard (kinetic) Langevin dynamics are, of the overdamped Langevin dynamics. Then, convergence of (GLE) cannot exceed ballistic speed, i.e.~the square root of the rate of the overdamped regime. We illustrate this phenomenon with explicit computations in a benchmark Gaussian case.
\end{abstract}
\maketitle

\section{Introduction}\label{sec:intro}
\subsection{Setting and main goal}
In this note, we obtain long-time convergence results to a large class of linear kinetic equations that can be written as
\begin{equation}\label{eq:genkineq}
    \partial_t f + \opT f = \Lv f, \quad f =f(t,x,v), \quad (t,x,v) \in \R_+\times\calX \times \calV.
\end{equation}
Here $\opT$ is the transport operator, and the operator $\Lv$ describes the energy dissipation in the $v$-variable, with precise assumptions and examples stated below. Usually $x$ and $v$ are interpreted as position and velocity variables, respectively, although the situations we consider are general, and $\calX$ and $\calV$ denote general measurable state spaces that may have different dimensions. 

\smallskip

We focus in particular on scenarios where \eqref{eq:genkineq} has a unique normalised steady state of the form
\begin{equation}\label{eq:geniv}
     \hat{\mu}(\dif x \, \dif v) =  \mu(\dif x) \kappa(x,\dif v),
\end{equation}
where $\hat{\mu}$ is a probability distribution, $\mu$ is the marginal distribution of $\hat{\mu}$ on $\calX$, and $\kappa$ denotes a Markov kernel. In many examples, $\hat\mu$ is a product measure $\hat{\mu} = \mu \otimes \kappa$ where $\kappa$ is independent of $x$, but this framework does not require $\hat{\mu}$ to be a product measure; in fact it can also be applied to nonequilibrium dynamics, where only the existence but not the expression of $\hat{\mu}$ is known \cite{dietert20232}.
The primary goal of this note is to highlight the link between variational hypocoercivity and non-reversible lifts, in a unified language, which is extensively explained in $\S$\ref{ss:fhtl}-\ref{ss:ass}.
Then, we state our \textbf{main result} Theorem \ref{thm:expconv} in $\S\ref{ss:mr}$, which yields a hypocoercive estimate in the form   
\begin{equation}
    \label{informal}
\|f(t,\cdot,\cdot)\|_{L^2(\hat{\mu})} \le C\mathrm{e}^{-\lambda t} \|f(t=0,\cdot,\cdot) \|_{L^2(\hat{\mu})}, 
\end{equation}
for all solutions $f$ to \eqref{eq:genkineq} with zero average initial data. We construct a fully explicit, accurate decay rate $\lambda>0$, and an explicit constant $C>1$.
In \S\ref{ss:ex}, we list a few important examples which are covered by our analysis.

\subsection{From hypocoercivity to non-reversible lifts}\label{ss:fhtl} In many cases of interest, the operator $\Lv$ is degenerate dissipative, i.e.~$\Lv \leq 0$, but $\ker(\Lv)$ is larger than $\ker(\Lv - \opT)$. In other words, equation \eqref{eq:genkineq} admits local equilibria which are not global ones. 
Convergence to global equilibrium still holds, as long as the conservative operator $\opT$ is able to force the solution to \eqref{eq:genkineq} outside $\mathrm{Ker} \, \Lv$. This is the heart of hypocoercivity, stemming from \cite{kolmogoroff1934zufallige,hormander1967hypoelliptic}, and C.~Villani's monograph \cite{villani2009hypocoercivity}. To counter the degeneracy of $\Lv$, a modified $H^1$-norm $\tilde{\mathcal E}$ is built in \cite{villani2009hypocoercivity}, such that $\tilde{\mathcal E} \approx \|\cdot\|_{H^1(\dif x\,\dif v)}$, and $\tilde{\mathcal E}$ acts as a strict entropy functional for \eqref{eq:genkineq}, i.e.\ for all solutions $f$, and a suitable global equilibrium $f_\infty$, we have $\frac{\dif}{\dif t} \tilde{\mathcal E} (f(t,\cdot,\cdot)-f_\infty) \leq - \lambda \, \tilde{\mathcal E}(f(t,\cdot,\cdot)-f_\infty)$, for some $\lambda>0$. Thus, for a constant $C>1$, 
\begin{equation}
    \label{hypo}
    \|f(t,\cdot,\cdot)-f_\infty\|_{H^1(\dif x\,\dif v)} \leq C \, \mathrm{e}^{-\lambda \, t} \|f(0,\cdot,\cdot)-f_\infty\|_{H^1(\dif x\,\dif v)}.
\end{equation}
This scheme has been fruitfully exploited in many works, including \cite{baudoin_bakry-emery_2013}, where a twisted $\Gamma_2$ calculus, a combination of \cite{bakry2006diffusions} and \cite{villani2009hypocoercivity}, is established. We refer the reader to \cite{bernard2022hypocoercivity} for a review of the literature. Later on, an $L^2$-hypocoercivty approach, inspired by diffusion limits, was outlined by Dolbeault, Mouhot, and Schmeiser in \cite{dolbeault2015hypocoercivity}, with ideas from earlier works \cite{herau_hypocoercivity_2006, dolbeault_hypocoercivity_2009}. In this influential work, an abstract framework is laid out with several assumptions that can be directly verified for a large class of kinetic equations (even in presence of non-regularising effects, such as scattering), with a wide range of applications. 
Inequalities in the form \eqref{hypo}, and extension of those in case of weak confinement \cite{bouin2021hypocoercivity,bouin2022fractional,bouin20222}, are obtained in the $L^2$-norm. 

\smallskip
As the $L^2$-framework of hypocoercivity provides explicit estimates for the constants $C>1, \lambda>0$, the problem of finding optimal values is posed. This can be done via mode-by-mode Fourier analysis in special situations \cite{arnold2021sharpening}. In general, if providing good estimates for the rate $\lambda$ is delicate on its own, optimising $\lambda$ and $C$ at the same time is problematic, and the two constants can even compete \cite{achleitner2017optimal}. 
\smallskip

This motivates a new \emph{variational} approach to $L^2$-hypocoercivity, where the $L^2$-norm is not twisted, first established by Albritton, Armstrong, Mourrat and Novack in \cite{albritton2024variational} and later developed in \cite{cao2023explicit, lu2022explicit, dietert2022quantitative,   brigati2023time, brigati2023construct,  dietert20232,  brigati2024explicit}. The method relies on the \emph{time-averaged} $L^2$-energy,  
\begin{equation*}
       \mathcal{H}(t) = \fint_t^{t+T}\|f(s,\cdot,\cdot)\|^2_{L^2(\hat{\mu})} \dif s,
\end{equation*}
which can be related to its dissipation along \eqref{eq:genkineq} 
\begin{equation*}
    \mathcal{I}(t) := -\frac{\dif }{\dif t} \mathcal{H}(t) = \fint_t^{t+T} \frac{\dif }{\dif s} \|f(s,\cdot,\cdot)\|^2_{L^2(\hat{\mu})} \dif s,
\end{equation*}
via an adapted Poincar\'e inequality
\begin{equation}\label{abstractpoi}
    \mathcal{I}(t) \geq \lambda_T \, \mathcal{H}(t).
\end{equation}
As in the classical parabolic case,  exponential decay for $\mathcal{H}(t)$ is recovered. This directly implies a hypocoercivity estimate \eqref{hypo}, see Theorem \ref{thm:expconv}, with the same rate $\lambda_T$.
The leading idea is that the time average allows the transport operator to carry relaxation from the $v$ to the $x$ variables, which is a non-instantaneous effect, as witnessed also in the kinetic De Giorgi theory \cite{anceschi2024poincar}. The parameter $T$ can be tuned to optimise the rate $\lambda_T$, without enlarging any other constant. 
\smallskip

The adapted Poincar\'e inequality \eqref{abstractpoi} is usually proved in two steps, namely a $(t,x)$-Poincar\'e--Lions inequality for the velocity average $\Pi f$ defined in \eqref{eq:defpiv}, and an averaging lemma (variants of which can be traced back to at least \cite{golse1988regularity}), relating weak norms of $\Pi f$ to the dissipation term $\mathcal{I}(t)$.
It was unclear whether this was the only reasonable path leading to \eqref{abstractpoi}, until the results of \cite{eberle2024non}. In that contribution, a new structure of \emph{non-reversible lifts of reversible diffusion processes} is formulated, of which \eqref{eq:genkineq} is a particular case under our assumptions below. The abstract theory of \cite{eberle2024non}, once rewritten for \eqref{eq:genkineq}, is reminiscent of \cite{albritton2024variational}. A novelty is that the intermediate steps for \eqref{abstractpoi} (i.e.~the Poincar\'e--Lions inequality and the averaging lemma) are endowed with a clear interpretation and identified by the lift structure, as highlighted by the proof of Theorem \ref{thm:expconv}. As an additional result, the work \cite{eberle2024non} allows to derive both upper and lower abstract bounds for the rate $\lambda_T$, which match well with the concrete computations of \cite{brigati2023construct,cao2023explicit}.

\subsection{Assumptions and discussion}\label{ss:ass} Following the lift structure, we provide a general framework for the variational approach \cite{albritton2024variational}, adapting the notation and the style of presentation of the DMS (Dolbeault--Mouhot--Schmeiser) framework  \cite{dolbeault2015hypocoercivity}. 

For any function $f(t, x,v)$, we define its projection
\begin{equation}\label{eq:defpiv}
    \Pi f(t, x) := \int_{\calV} f(t, x,v)  \kappa(x, \dif v),
\end{equation}
which yields a self-adjoint operator on $L^2(\hat{\mu})$.
We assume that the operator $\opL = \Lv - \opT$ is the generator of some Markov process with transition semigroup $(\hat{P}_t)_{t\ge 0}$, so that the dynamics \eqref{eq:genkineq} is mass-preserving. Moreover, we impose the following assumptions on $\opT$.

\begin{assumption}\label{ass:dmsh3}
$\Pi \opT \Pi =0$.
\end{assumption}
\begin{assumption}\label{ass:dmsh2}
    The operator $\Lap := -(\opT \Pi)^*(\opT \Pi)$ is self-adjoint with respect to $L^2(\mu)$ and corresponds to the generator of some reversible diffusion process with transition semigroup $(P_t)_{t\ge 0}$. Moreover, we have the Poincar\'e inequality (macroscopic coercivity)
    \begin{equation}\label{eq:pix}
        \mathrm{P}_x := \inf_{f\in L_0^2(\hat\mu)\setminus\{0\}}\frac{\norm{\opT\Pi f}_{L^2(\hat\mu)}^2}{\norm{\Pi f}_{L^2(\mu)}^2}>0,
    \end{equation}
    or, equivalently, $\Lap$ has spectral gap $\mathrm{P}_x$.
\end{assumption}

\begin{definition}[{\cite[Definition 1]{eberle2024non}}]\label{def:lift}
The semigroup $(\hat{P}_t)_{t\geq 0}$ is a second-order lift of $(P_{t/2})_{t\geq0}$ if Assumptions \ref{ass:dmsh3}-\ref{ass:dmsh2} are satisfied.
\end{definition}

Let us comment on this definition. Motivated by an acceleration in convergence to equilibrium through non-reversibility, in discrete time, a concept of lifts of Markov chains was introduced by \cite{Diaconis2000Lift} and formalised by \cite{chen1999Lift}. The concept of second-order lifts introduced in \cite{eberle2024non} is related yet distinct, as it covers continuous-time dynamics and incorporates a linear-to-quadratic time rescaling which is crucial in the kinetic continuous-time case. Intuitively, under minor additional assumptions, see \cite[Remark 2 (v)]{eberle2024non}, Definition~\ref{def:lift} corresponds to
\begin{equation*}
    \int_\calV \hat P_t(\Pi f)(x,v)\kappa(x,\dif v) = P_{(t/2)^2}\Pi f(x) + o(t^2)\quad\text{for all }f\in L^2(\hat\mu)
\end{equation*}
as $t\to 0$, i.e.\ the semigroup $P_{(t/2)^2}$ behaves approximately as an averaged version of the lifted transition semigroup $\hat P_t$ for small values of $t$.

\begin{remark}[Lower bounds for lifts]\label{rem:lowerbound}
    Using the framework of lifts, under Assumptions \ref{ass:dmsh3}-\ref{ass:dmsh2} only, we can recover a general lower bound for the convergence rate of \eqref{eq:genkineq}.
    Denoting the relaxation time of $(\hat P_t)_{t\geq 0}$ as 
    \begin{equation*}
        t_\rel(\hat P) = \inf\{t\geq0\colon\norm{\hat P_tf}_{L^2(\hat\mu)}\leq \mathrm{e}^{-1}\norm{f}_{L^2(\hat\mu)}\textup{ for all }f\in L_0^2(\hat\mu)\},
    \end{equation*}
    the fact that $(\hat P_t)_{t\geq 0}$ is a second-order lift of $(P_{t/2})_{t\geq 0}$ immediately yields the lower bound
    \begin{equation*}
        t_\rel(\hat P)\geq\frac{1}{\sqrt{2}}\mathrm{P}_x^{-1/2}
    \end{equation*}
    by \cite[Theorem 11]{eberle2024non}. In particular, if \eqref{hypo} is satisfied for some $C>1$ and $\lambda>0$, then the asymptotic decay rate $\lambda$ can be bounded as
    \begin{equation*}
        \lambda\leq (1+\log C)\sqrt{\mathrm{P}_x}.
    \end{equation*}
    Since $\mathrm{P}_x^{-1}$ is the relaxation time of the reversible transition semigroup $(P_t)_{t\geq 0}$, this shows that convergence to equilibrium as measured by the relaxation time cannot be accelerated by more than a square root through lifting.
\end{remark}
The following Assumptions \ref{ass:opT}-\ref{ass:lions} are not needed for $(\hat{P}_t)$ to be a second-order lift of $(P_t)$, and therefore only affect the upper bound of the convergence analysis.

\begin{assumption}[{\cite[Assumption 1 (ii)]{eberle2024non}}]\label{ass:opT}
     The operator $\opT$ is non-negative in the sense that for any $f\in \mathrm{Dom}(\opT)$, 
\begin{equation}\label{eq:optpos}
    \iint_{\calX \times \calV} f(x,v) \opT f(x,v) \dif \hat{\mu}(x,v) \ge 0.
\end{equation}
\end{assumption}
Assumption \ref{ass:opT} holds automatically with equality when $\opT$ is antisymmetric, but follows from a direct calculation in some other examples. 
We define \[\|f\|_{L^2(\mu;H^1(\kappa))}^2 := \iint_{\calX\times \calV} (|f|^2 + |\nabla_v f|^2) \dif \hat{\mu}(x,v),\]
and $\|f\|_{L^2(\mu;H^{-1}(\kappa))}$ by $L^2$-duality
\[ \|f\|_{L^2(\mu;H^{-1}(\kappa))} := \sup_{\|g\|_{L^2(\mu;H^1(\kappa))}=1} \iint_{\calX \times \calV} fg\dif \hat{\mu}(x,v).\]
\begin{assumption}\label{ass:dmsh1}
The operator $\Lv$ is self-adjoint in $L^2(\hat{\mu})$. In addition, its nullspace $\ker(\Lv)$ is spanned by functions that only depend on the $x$-variable, i.e.\ \[ \ker(\Lv) = \mathrm{Im}(\Pi).\]
Moreover, we have microscopic coercivity \cite[Assumption (H1)]{dolbeault2015hypocoercivity}:
     \begin{equation}\label{eq:piv}
       -\langle f, \Lv f\rangle_{L^2(\hat{\mu})} \ge \mathrm{P}_v \|(\mathrm{Id}-\Pi) f\|^2_{L^2(\hat{\mu})}.
    \end{equation}
    Finally, 
    \begin{equation}\label{eq:H-1kappa}
        \|\Lv f\|_{L^2(\mu;H^{-1}(\kappa))}^2 \le -R\langle f, \Lv f\rangle_{L^2(\hat{\mu})}. 
    \end{equation}
\end{assumption} 

\begin{remark}[DMS Hypocoercivity and lifts] \label{rmk:dmslift}
    Assumptions \ref{ass:dmsh3}-\ref{ass:dmsh2} correspond to assumptions (H3) and (H2) of the DMS method \cite{dolbeault2015hypocoercivity} respectively. At the same time, Assumptions \ref{ass:dmsh3} and \ref{ass:dmsh2} are precisely the two assumptions in \cite{eberle2024non} which indicate that $(\hat{P}_t)_{t\geq0}$ is a second-order lift of $(P_{t/2})_{t\geq0}$ as defined in \cite[Definiton 1]{eberle2024non}, aside a from a domain condition. If we disregard \eqref{eq:H-1kappa}, Assumption \ref{ass:dmsh1} is precisely \cite[Assumption (H1)]{dolbeault2015hypocoercivity}. The inequality \eqref{eq:H-1kappa} holds true, with $R=\gamma$, under the most common choices $\Lv = \gamma (\Pi-\mathrm{Id})$ or $\Lv = -\gamma \nabla_v^*\nabla_v$. The coercivity assumptions \eqref{eq:pix} and \eqref{eq:piv} can be weakened into some weak or weighted version of the Poincar\'e inequality, see \cite{brigati2023time, brigati2023construct, brigati2024explicit, dietert20232}. 
\end{remark}

As for our final assumption, readers familiar with the works \cite{cao2023explicit, brigati2023construct, eberle2024non} will recognise that it resembles the solution of the divergence equation, which is in duality with the aforementioned Poincar\'e--Lions inequality. There are various ways in the literature to construct such solutions with required regularity estimates, see also \cite{eberle2024divergence,eberle2024rtp} and \cite{lehec2024convergence}.
\begin{assumption}\label{ass:lions}
    Let $T>0$, $\lambda (\dif t) := \frac{1}{T}\mathbf{1}_{[0,T]} \dif t$ denote the normalised Lebesgue measure on $[0,T]$, and denote $\bar{\mu} := \lambda \otimes \mu$. Then for any $g(t,x) \in L^2(\bar{\mu})$ with $\iint g(t,x) \dif \bar{\mu}(t,x) =0$, there exist functions $\phi_0,\phi_1$ with boundary conditions $\phi_0(0,\cdot) = \phi_0(T,\cdot)=0, \ \opT\phi_1(0,\cdot) = \opT\phi_1(T,\cdot) =0$ such that
    \begin{equation}\label{eq:generaldiveq}
    -\partial_t \phi_0 - \Lap\phi_1 = g.
\end{equation}
Moreover, there exist constants $C_{0,T}, C_{1,T}$ depending on $T$ such that $\phi_0,\phi_1$ satisfy the regularity estimates 
\begin{equation}\label{eq:divbdL2}
    \|\phi_0 + \opT \phi_1\|_{L^2({\bar{\mu}};H^{1}(\kappa))} \le C_{0,T}\|g\|_{L^2(\bar{\mu})},
\end{equation} and
\begin{equation}\label{eq:divbdH1}
     \|(\partial_t - \opT^*)(\phi_0 + \opT  \phi_1) \|_{L^2(\lambda\otimes\hat\mu)} \le C_{1,T}\|g\|_{L^2(\bar{\mu})}.
\end{equation}
\end{assumption}
In practice, for most of the examples we consider, $\kappa$ only depends on $v$ and is standard Gaussian, thus Assumption \ref{ass:lions} can be verified since any finite moments of $v$ are bounded, and the estimates \eqref{eq:divbdL2}, \eqref{eq:divbdH1} boil down to \[\|\phi_0\|_{H^1(\bar{\mu})} + \|\partial_t\phi_1\|_{H^1(\bar{\mu})} + \|\nabla_x\phi_1\|_{H^1(\bar{\mu})} \lesssim \|g\|_{L^2(\bar{\mu})}. \]
In previous works \cite{cao2023explicit, brigati2023construct, eberle2024non, lehec2024convergence}, this can be achieved if $\mu$ satisfies a Poincar\'e inequality \eqref{eq:pix}, and for $\dif \mu(x) \mathrel{\propto} \exp(-U(x)) \dif x$, the Hessian of $U$ satisfies a (not necessarily positive) lower bound. In fact, one may notice that in the previous works, the verification of Assumption \ref{ass:lions} is the only place where macroscopic coercivity \eqref{eq:pix} is used. We nevertheless write the two assumptions separately as \eqref{eq:pix} is crucial for the definition of lifts and obtaining convergence lower bounds.

\smallskip
\begin{remark}[Variational hypocoercivity and DMS]
In light of Remark \ref{rmk:dmslift}, it is natural to compare Assumption \ref{ass:lions} with \cite[Assumption (H4)]{dolbeault2015hypocoercivity}. In \cite[Section 2]{dolbeault2015hypocoercivity}, the authors show that \cite[Assumption (H4)]{dolbeault2015hypocoercivity} is related to elliptic regularity theory. On the other side, Assumption \ref{ass:lions} requires regularity for a divergence equation, which can be completed to an elliptic one \cite{cao2023explicit,brigati2023construct}. The two assumptions differ in their spirit, though, since  \cite[Assumption (H4)]{dolbeault2015hypocoercivity} is used to control error terms arising from a modified $L^2$-norm, whereas the bounds \eqref{eq:divbdL2}-\eqref{eq:divbdH1} arise from the lift structure via an integration by parts. 
\end{remark}
\smallskip

\subsection{Main result}\label{ss:mr}
Under the Assumptions \ref{ass:dmsh3}--\ref{ass:lions}, we can prove exponential convergence of the solutions of \eqref{eq:genkineq} to equilibrium.
\begin{theorem}\label{thm:expconv}
    Let $f(t,x,v)$ be the solution of \eqref{eq:genkineq} with initial condition $f(t=0,x,v) = f_0(x,v) \in L^2(\hat{\mu})$. Then as $t\to \infty$, $f_t$ converges exponentially fast to $(f_0):= \int f_0(x,v) \dif \hat{\mu}(x,v)$. More precisely, there exist constants $C>1,\lambda>0$ such that
    \[
        \|f(t,\cdot,\cdot) - (f_0) \|_{L^2(\hat{\mu})} \le Ce^{-\lambda t} \|f_0 - (f_0)\|_{L^2(\hat{\mu})} .
\]
In particular, suppose there exists some $T_*>0$ such that
\[ T_* = \argmin_{T>0}\{C_{1,T} + C_{0,T}\sqrt{R\mathrm{P}_v}\}, \]
then we have $C=\exp(T_*\lambda_{T_*})$ with the optimised choice of convergence rate 
\begin{equation}\label{eq:abstractrate}
    \lambda_{T_*} = \frac{2\mathrm{P}_v}{1 + (C_{1,T_*} + C_{0,T_*}\sqrt{R\mathrm{P}_v})^2},
\end{equation}
\end{theorem}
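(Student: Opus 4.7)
The plan is to apply the variational hypocoercivity strategy directly: for a free parameter $T>0$, work with the time-averaged energy $\mathcal{H}(t) := \fint_t^{t+T}\norm{f(s,\cdot,\cdot)-(f_0)}_{L^2(\hat\mu)}^2\dif s$ and its dissipation $\mathcal{I}(t) := -\tfrac{\dif}{\dif t}\mathcal{H}(t)$, and establish an adapted Poincar\'e-type inequality $\mathcal{I}(t)\geq \lambda_T\mathcal{H}(t)$. Gronwall's lemma then yields exponential decay of $\mathcal{H}$, and the monotonicity of $s\mapsto\norm{f(s)-(f_0)}_{L^2(\hat\mu)}^2$ (obtained below) translates this into the desired pointwise estimate for $\norm{f(t)-(f_0)}_{L^2(\hat\mu)}$, with $T$ eventually chosen as the minimiser $T_*$. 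By linearity we may assume $(f_0)=0$ throughout.

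For the dissipation bound, differentiating $\norm{f(s)}^2_{L^2(\hat\mu)}$ along \eqref{eq:genkineq} and using self-adjointness of $\Lv$ together with \eqref{eq:piv} and Assumption \ref{ass:opT} gives $\tfrac{\dif}{\dif s}\norm{f(s)}^2 = 2\langle f,\Lv f\rangle - 2\langle f,\opT f\rangle \leq -2P_v\norm{(\mathrm{Id}-\Pi)f}^2$; averaging in time yields $\mathcal{I}(t)\geq 2P_v\fint_t^{t+T}\norm{(\mathrm{Id}-\Pi)f(s)}^2\dif s$. The same computation, combined with $\langle f,\opT f\rangle\geq 0$, produces the useful inequality $-\fint_t^{t+T}\langle f,\Lv f\rangle\dif s \leq \tfrac{1}{2}\mathcal{I}(t)$, which together with \eqref{eq:H-1kappa} controls $\norm{\Lv f}_{L^2(\bar\mu; H^{-1}(\kappa))}^2$ by $R\mathcal{I}(t)/2$.

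The core step, and the main obstacle, is to bound $\norm{\Pi f}_{L^2(\bar\mu)}^2$ in terms of $\mathcal{I}(t)$ by duality against the divergence equation. I apply Assumption \ref{ass:lions} with $g = \Pi f(t+\cdot,\cdot)$ (which has zero $\bar\mu$-mean since $f$ has zero $\hat\mu$-mean throughout) to obtain test functions $\phi_0,\phi_1$ with the stated boundary conditions, and set $\psi := \phi_0 + \opT\phi_1$, so that $\psi$ vanishes at the endpoints in time. A crucial observation is that Assumption \ref{ass:dmsh3} gives $\Pi\opT^*\Pi = (\Pi\opT\Pi)^* = 0$, and this combined with the divergence equation implies the key identity $\Pi(\partial_t-\opT^*)\psi = -\Pi f$. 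Pairing $f$ with $(\partial_t-\opT^*)\psi$ in $L^2(\lambda\otimes\hat\mu)$, integrating by parts in time, and substituting $\partial_t f = \Lv f - \opT f$, the left-hand side collapses to $-\langle \Lv f,\psi\rangle$; decomposing $f=\Pi f+(\mathrm{Id}-\Pi)f$ in the other expansion and invoking the identity above yields
\[
\norm{\Pi f}_{L^2(\bar\mu)}^2 = \langle \Lv f,\psi\rangle + \langle (\mathrm{Id}-\Pi)f, (\mathrm{Id}-\Pi)(\partial_t-\opT^*)\psi\rangle.
\]
Tracking the various adjoints and cancelling the cross-terms through Assumption \ref{ass:dmsh3} is where most of the care is needed.

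To conclude, Cauchy--Schwarz on each term, combined with the regularity bounds \eqref{eq:divbdL2}--\eqref{eq:divbdH1} on $\psi$ and the estimates from the second paragraph, yields
\[
\norm{\Pi f}_{L^2(\bar\mu)}^2 \leq \Bigl(C_{0,T}\sqrt{R\mathcal{I}(t)/2} + C_{1,T}\sqrt{\mathcal{I}(t)/(2P_v)}\Bigr)\norm{\Pi f}_{L^2(\bar\mu)},
\]
hence $\norm{\Pi f}_{L^2(\bar\mu)}^2 \leq (C_{1,T}+C_{0,T}\sqrt{RP_v})^2\mathcal{I}(t)/(2P_v)$. Adding the $(\mathrm{Id}-\Pi)f$ contribution recovers the adapted Poincar\'e inequality $\mathcal{H}(t)\leq \mathcal{I}(t)/\lambda_T$ with $\lambda_T$ as in \eqref{eq:abstractrate}. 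Optimising $T=T_*$ and converting decay of $\mathcal{H}$ into pointwise decay of $\norm{f-(f_0)}_{L^2(\hat\mu)}$ via monotonicity completes the proof, with the constant $C$ accounting for the $T_*$-wide averaging window.
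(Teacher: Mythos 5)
Your proof is correct and follows essentially the same strategy as the paper's: you test $\Pi f$ against the solution $(\phi_0,\phi_1)$ of the divergence equation from Assumption~\ref{ass:lions}, integrate by parts, exploit the cancellations coming from Assumptions~\ref{ass:dmsh3}--\ref{ass:dmsh2}, and conclude by Cauchy--Schwarz with the bounds \eqref{eq:divbdL2}--\eqref{eq:divbdH1}, \eqref{eq:piv}, \eqref{eq:H-1kappa}, arriving at the same adapted Poincar\'e inequality $\mathcal{I}(t)\geq\lambda_T\mathcal{H}(t)$. The only organizational difference is that the paper first isolates the purely functional averaging inequality (Lemma~\ref{lem:avg}) without invoking the kinetic equation, and substitutes $(\partial_t+\opT)f=\Lv f$ only afterwards in the proof of the theorem, whereas you substitute the equation directly into the duality pairing to obtain $\|\Pi f\|^2 = \langle\Lv f,\psi\rangle + \langle(\mathrm{Id}-\Pi)f,(\mathrm{Id}-\Pi)(\partial_t-\opT^*)\psi\rangle$ --- which, after undoing the integration by parts, is identical to \eqref{eq:Pivfsqr}; a tiny imprecision is that the identity $\Pi(\partial_t-\opT^*)\psi=-\Pi f$ uses Assumption~\ref{ass:dmsh2} (to identify $(\opT\Pi)^*(\opT\Pi)=-\Lap$), not only Assumption~\ref{ass:dmsh3}.
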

\begin{remark}
    To verify the optimality of the convergence rate in Theorem \ref{thm:expconv}, let us consider the Langevin dynamics \eqref{eq:ld}, for which  $R = \mathrm{P}_v = \gamma$. If $U$ is convex and has Poincar\'e constant $\mathrm{P}_x$, then the proof in \cite{eberle2024divergence} gives $C_{0,T} = C_0(T+\frac{1}{\sqrt{\mathrm{P}_x}})$ and $C_{1,T} = C_1(1+ \frac{1}{T\sqrt{\mathrm{P}_x}})$ where $C_0,C_1$ denote absolute constants which can be made explicit. Substituting these into \eqref{eq:abstractrate}, we get \[\lambda \sim \frac{2\gamma}{\inf_{T>0}\Big(1+ \frac{1}{T\sqrt{\mathrm{P}_x}}+T\gamma + \frac{\gamma}{\sqrt{\mathrm{P}_x}}\Big)^2}.\] Optimising first in $T_*\sim \mathrm{P}_x^{-1/4}\gamma^{-1/2}$ and then in $\gamma\sim \sqrt{\mathrm{P}_x}$, we have $\lambda  \sim \sqrt{\mathrm{P}_x}$ which recovers the result in \cite{cao2023explicit, eberle2024non}, and the constant $C=e^{\lambda T}$ is then bounded above by a universal constant. 
\end{remark}

\subsection{Examples}\label{ss:ex}

Our main running examples of this work are second-order lifts of overdamped Langevin diffusions on $\mathcal{X} = \R^{d}$, given by the solution $(X_t)_{t\geq 0}$ to the stochastic differential equation
\begin{equation}\label{eq:old}
    \dif X_t = -\nabla U(X_t) \dif t + \sqrt{2}\dif B_t.
\end{equation}Here $(B_t)_{t\geq0}$ is a standard $d$-dimensional Brownian motion. The invariant probability measure of \eqref{eq:old} is the Boltzmann-Gibbs measure $\dif \mu(x) \mathrel{\propto} \exp(-U(x)) \dif x$, and the generator of $(P_t)$ in $L^2(\mu)$ can be written as $\Lap = \Delta_x -\nabla U \cdot \nabla _x = -\nabla_x^*\nabla_x$. Therefore, \eqref{eq:pix} essentially amounts to a (super)linear growth of $U$ at infinity, see \cite[Proposition 4.4.2]{bakry_analysis_2014}. Below are some of the dynamics that are covered in this work, which are all second-order lifts of overdamped Langevin dynamics. For examples (i)--(iii) below, we know from \cite{cao2023explicit, lu2022explicit} that the variational hypocoercivity approach is applicable. 

\begin{enumerate}[(i)]
\item \emph{Langevin dynamics} is the solution $(X_t,V_t)_{t\geq 0}$ to the stochastic differential equation
    \begin{equation}\label{eq:ld}
		\left\{
		\begin{aligned}
			\dif X_t &=  V_t\dif t\\
			\dif V_t &=  -\nabla U({X}_t)\dif t - \gamma V_t\dif t + \sqrt{2\gamma}\dif {W}_t,
		\end{aligned}\right.
	\end{equation}
    with a standard $d$-dimensional Brownian motion $(W_t)_{t\geq0}$. 
    Here $\gamma>0$ is the friction parameter, the state spaces are $\calX=\calV=\R^{d}$, and the corresponding operators write $\opT = -v\cdot \nabla_x + \nabla U \cdot \nabla_v$ and $\Lv = -\gamma \nabla_v^*\nabla_v$. In this case, $\kappa = \kappa(\dif v) = \mathcal{N}(0,\mathrm{Id})$ is independent of $x$ and $\hat{\mu} = \mu\otimes \kappa$ is the product measure. Langevin dynamics can also be seen as a perturbation of the deterministic Hamiltonian dynamics  $ \frac{\dif X_t}{\dif t} = V_t , \, \frac{\dif V_t}{\dif t} = -\nabla U(X_t) $ by an Ornstein-Uhlenbeck process with intensity $\gamma$ in the velocity component.
     
\item \emph{Randomised Hamiltonian Monte Carlo} (RHMC) \cite{bou2015randomized}
     is obtained by interspersing the Hamiltonian dynamics $ \frac{\dif X_t}{\dif t} = V_t , \, \frac{\dif V_t}{\dif t} = -\nabla U(X_t) $ with complete velocity refreshments according to $\kappa =\mathcal{N}(0,\mathrm{Id})$ after exponentially distributed waiting times. Here the choice of $\calX$, $\calV$ and $\opT$ is identical to case (i) but $\Lv = \gamma (\Pi-\mathrm{Id})$.
 
\item The \emph{Zig-Zag process} \cite{bierkens2019zig}
    runs in straight lines, interspersed with flips of one of the components of the velocity at random times. Ergodicity of the Zig-Zag process is known without velocity refreshment \cite{bierkens2019ergodicity} but refreshment as in the case of RHMC can still be added to obtain quantitative convergence upper bounds. The operators have the form $\opT = -v\cdot \nabla_x - \sum_{k=1}^d (v_k \partial_{x_k}U)_+ (\mathcal{B}_k - \mathrm{Id}) $ where the $\mathcal{B}_k$ are reflection operators $\mathcal{B}_k f(x,v) =f\bigl(x,v-2v_ke_k\bigr)$ with $e_k$ being the $k$-th cartesian coordinate, and $\Lv = \gamma(\Pi-\mathrm{Id})$. The notable feature in this example is that, unlike the previous two examples, $\opT$ is not antisymmetric in $L^2(\hat{\mu})$. Nevertheless, Assumptions \ref{ass:dmsh3}--\ref{ass:dmsh1} can all be verified. Various other piecewise deterministic Markov processes introduced in the MCMC literature \cite{bouchard2018bouncy, michel2014generalized} also fit into the scope of this framework.
 
\item One example, which this hypocoercivity framework has not yet been applied to, is the \emph{adaptive Langevin} dynamics \cite{jones2011adaptive, ding2014bayesian}
	\begin{equation}
		\label{eq:adl}
		\left\{
		\begin{aligned}
			\dif Q_t &=  V_t\dif t,\\
			\dif V_t &=  -\nabla U(Q_t)\dif t - \frac{Z_t}{\eps} V_t \dif t - \gamma V_t\dif t + \sqrt{2\gamma}\dif {W}_t,\\ \dif Z_t & = \frac{1}{\eps}(|V_t|^2-d) \dif t,
		\end{aligned}\right.
	\end{equation}
    where $(W_t)_{t\geq0}$ is a standard $d$-dimensional Brownian motion.
    The motivation for such dynamics is that in practice, the computation of $\nabla U$ is challenging and may be effectively corrupted, which may lead to a significant bias in the invariant measure when using the Langevin dynamics \eqref{eq:ld}. Hence we define $Z_t$ to be the friction correction variable which is defined by a negative feedback loop control law (as in the Nos\'e--Hoover method \cite{nose1984unified}), so that $|V_t|^2$ is encouraged to stay around $d$, and the true friction $\tfrac{Z_t}{\eps}+\gamma$ may adjust with different noise levels of
    the gradient without knowing the strength of the gradient noise. We refer to \cite{leimkuhler2020hypocoercivity} for a more comprehensive motivation of adaptive Langevin dynamics.

\smallskip

The correct way to view this example is that it is a second-order lift of the overdamped Langevin dynamics jointly in $(q,z)$. In particular, let $x=(q,z) \in \R^{d}\times \R = \mathcal{X}$ and $v\in \R^d = \calV$, the invariant measure of \eqref{eq:adl} writes
     \begin{align*}
    		\dif \mu(q,z) \mathbin{\propto} e^{-U(q) - \frac{|z|^2}{2}}\dif q \dif z, \, \dif \kappa(v) = \frac{1}{(2\pi)^{d/2}} e^{-\frac{\abs{v}^2}{2}} \dif v, \, \dif \hat{\mu}(q,v,z) =\dif \mu(q,z) \otimes \dif\kappa(v).
    	\end{align*}
    The corresponding operators of \eqref{eq:adl} write \cite[(2.5)]{leimkuhler2020hypocoercivity}
    \begin{equation}\label{eq:opL}
        -\opT = \nabla_v^*\nabla_q - \nabla_q^*\nabla_v + \frac{1}{\eps}\oplnh, \qquad \Lv = -\gamma \nabla_v^*\nabla_v
    \end{equation}
    with the $\oplnh$ part resembling the Nos\'e--Hoover feedback control
    \begin{equation}\label{eq:lnh}
        \oplnh = (|v|^2-d) \partial_z - z v\cdot\nabla_v = (\partial_z-\partial_z^*)\nabla_v^*\nabla_v + \Delta_v^*\partial_z - \Delta_v\partial_z^*, \qquad \partial_z^* = -\partial_z + z,  
    \end{equation}
    so that $\oplnh$, and therefore $\opT$, is antisymmetric. 
    A straightforward computation yields
    \[-\Lap = (\opT \Pi)^* (\opT \Pi) = \nabla_q^*\nabla_q + \frac{2d}{\eps^2}\partial_z^*\partial_z,\] which, by a rescaling, is the generator of an overdamped Langevin diffusion in $(q,z)$ variables 
    \[ \dif \begin{pmatrix}
        Q_t \\ Z_t
    \end{pmatrix} = -\begin{pmatrix}
        \nabla U(Q_t) \\ Z_t
    \end{pmatrix} \dif t + \sqrt{2} \dif B_t,\]
    where $(B_t)_{t\geq0}$ is a standard $(d+1)$-dimensional Brownian motion.
    In particular, by tensorisation, the Poincar\'e inequality \eqref{eq:pix} holds true as long as $\nabla_q^*\nabla_q$ has a spectral gap. Although exponential convergence has been shown in \cite{herzog2018exponential, leimkuhler2020hypocoercivity}, in Section \ref{sec:adld} we will apply this framework to obtain sharper convergence upper bounds. Furthermore, we will determine the choice of the parameters $\eps, \gamma$ that optimises the convergence rate.

\item\label{ex:gle} The generalised Langevin dynamics is the Markov process $(X_t,V_t,Z_t)_{t\geq 0}$ on $\calX\times\calV$ with $\calX = \R^d$ and $\calV = \R^d\times\R^d$ given by the solution to the stochastic differential equation
    \begin{equation}
    	\label{eq:gle}
    	\left\{
    	\begin{aligned}
    		\dif X_t &=  V_t\dif t\\
    		\dif V_t &=  -\nabla U(X_t)\dif t +\lambda Z_t\dif t,\\ \dif Z_t & = -\lambda V_t \dif t - \gamma Z_t \dif t + \sqrt{2\gamma}\dif {W}_t,
    	\end{aligned}\right.
    \end{equation}
    where $(W_t)_{t\geq 0}$ is a standard $d$-dimensional Brownian motion and $\lambda,\gamma>0$. It arises as a quasi-Markovian approximation of a more general model for the motion of a
    particle interacting with a heat bath at equilibrium originating in molecular dynamics \cite{pavliotis2021scaling}. It was also considered independently by computer scientists in \cite{mou2021high} as a third-order Langevin dynamics. Equation \eqref{eq:gle} is known as the generalised Langevin equation (GLE), and hypocoercivity for this dynamics has been shown in \cite{ottobre2011asymptotic, pavliotis2021scaling}. Its invariant probability measure is  
    \begin{equation}\label{eq:iv}
        \hat\mu(\dif x\,\dif v\,\dif z) =  \mu(\dif x) \kappa(\dif v\,\dif z)
    \end{equation}
    with $\dif \mu(x) \mathbin{\propto} e^{-U(x)}\dif x$ and $\kappa = \mathcal{N}(0,I_d)\otimes \mathcal{N}(0,I_d)$. The generator of the associated transition semigroup on $L^2(\hat\mu)$ is $\opL = \Lv-\opT$ with the transport and dissipation operators
    \begin{equation*}
        -\opT = v\cdot\nabla_x - \nabla U\cdot\nabla_v + \lambda (z\cdot\nabla_v-v\cdot\nabla_z)
    \end{equation*}
    and
    \begin{equation*}
        \Lv = \gamma(-z\cdot\nabla_z + \Delta_z) = -\gamma \nabla_z^*\nabla_z,
    \end{equation*}
    respectively, where the adjoint is with respect to $\kappa$. In this example, the role of $v$ is played by $(v,z)$, and $\Lv$ in fact only acts on $z$, so that we have an additional level of degeneracy. 

    \smallskip
    
    The work of \cite{cao2023explicit} finds that, despite the degeneracy in ellipticity, Langevin dynamics \eqref{eq:ld} accelerates the convergence rate by (up to a universal constant) a square root if the potential is convex, which is particularly relevant in case $\mathrm{P}_x$ is very small. This raises the question of whether generalised Langevin dynamics, which adds another level of elliptic degeneracy, can converge with even more acceleration than a square root. In Section \ref{sec:gle}, we will give a negative answer to this question by showing the rather surprising observation that GLE is also a second-order lift of an overdamped Langevin diffusion.
    Meanwhile, due to the extra degeneracy in terms of ellipticity compared to examples (i)--(iv), GLE violates Assumption \ref{ass:dmsh1}, since $\Lv$ is only coercive in the $z$-variable. Hence, establishing a sharp upper bound for its convergence rate using the framework covered in this work 
    is much more involved. This is an ongoing work of the authors.

\item Boundary conditions can also be incorporated into this framework. For example, an application to run-and-tumble particles confined by hard walls is given in \cite{eberle2024rtp}. In this case, the domain of $\opT$ incorporates boundary conditions, and the stochastic process associated to $\Lap$ is a sticky Brownian motion. This necessitates a careful treatment of the associated domains in the divergence equation in \Cref{ass:lions}, which we will not cover here.

\item Finally, the equilibrium distribution $\hat\mu$ need not be separable, i.e.\ a product measure. Such situations can for example arise naturally in case $(\calX,g)$ is a Riemannian manifold.
Choosing $\kappa(x,\cdot) = \mathcal{N}(0,g(x))$ allows one to study generalisations of many of the dynamics introduced above to Riemannian manifolds and to interpret them as lifts of an overdamped Riemannian Langevin diffusion, see \cite[Example 4]{eberle2024non} and \cite{eberle2024divergence}. Such processes include, for example, Riemannian Hamiltonian Monte Carlo \cite{GirolamiCalderhead2011Riemann} or Riemannian Langevin dynamics \cite{eberle2024divergence}. Non-separable Hamiltonian may also appear due to physical or numerical purposes, see for example \cite[Section XIII.10]{hairer2006geometric}, \cite{fang2014compressible} and \cite{lelievre2024unbiasing}.

\end{enumerate}

We outline how the rest of the paper is organised: In Section \ref{sec:genfmwk} we present the proof of Theorem \ref{thm:expconv} under the general framework using Assumptions \ref{ass:dmsh3}--\ref{ass:lions}. In Section~\ref{sec:adld}, we provide a new application of the framework to adaptive Langevin dynamics and obtain sharper convergence results compared to  \cite{herzog2018exponential,leimkuhler2020hypocoercivity}. Finally, in Section~\ref{sec:gle}, we present the observation that GLE is also a second-order lift of \eqref{eq:old} and illustrate the resulting lower bound in a benchmark Gaussian case.

\section{The general convergence proof}\label{sec:genfmwk}
The key intermediate step that follows the solution of the divergence equation is the space-time-velocity Poincar\'e-type inequality, which is also known as the averaging lemma. Despite that $\hat{\mu}$ potentially being not separable, it is still convenient to write $\hat{\mu} = \mu \otimes\kappa$ or $\lambda \otimes \hat{\mu} = \bar{\mu} \otimes\kappa$, since we are treating the time and space variables together, and functions may have different regularities in the space-time and velocity variables.
\begin{lemma}\label{lem:avg}
    Under Assumptions \ref{ass:dmsh3}, \ref{ass:dmsh2}, \ref{ass:lions}, for any $f(t,x,v) \in L^2(\bar{\mu}\otimes\kappa)$ with $\iint f  \dif(\bar{\mu}\otimes \kappa)=0$, we have the functional inequality  
    \begin{equation}\label{eq:avg}
        \|\Pi f\|_{L^2(\bar{\mu})} \le C_{1,T}\|(\mathrm{Id}-\Pi)f \|_{L^2(\bar{\mu}\otimes\kappa)} +C_{0,T} \|(\partial_t + \opT)f \|_{L^2(\bar{\mu}; H^{-1}(\kappa))}.
    \end{equation}
    Here $C_{0,T},C_{1,T}$ are the same constants that appear in Assumption \ref{ass:lions}.
\end{lemma}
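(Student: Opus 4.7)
The right approach is a duality argument powered by the divergence equation in Assumption~\ref{ass:lions}. Since $\int \Pi f \, d\bar\mu = 0$, I would write
\begin{equation*}
    \|\Pi f\|_{L^2(\bar\mu)} \;=\; \sup\Bigl\{\,\langle \Pi f, g\rangle_{L^2(\bar\mu)} : g\in L^2(\bar\mu),\ \tstyle\int g\,d\bar\mu = 0,\ \|g\|_{L^2(\bar\mu)}=1\Bigr\},
\end{equation*}
and for each such test function $g$ invoke Assumption~\ref{ass:lions} to produce $\phi_0,\phi_1$ with $-\partial_t\phi_0-\Lap\phi_1 = g$, together with the two \textit{a priori} bounds \eqref{eq:divbdL2}--\eqref{eq:divbdH1} for $\psi := \phi_0 + \opT\phi_1$ and $(\partial_t-\opT^*)\psi$. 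The whole game is then to rewrite $\langle \Pi f,g\rangle$ as an inner product involving these two regular quantities, tested against the two terms we are allowed to pay for on the right-hand side of \eqref{eq:avg}.

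\textbf{Main computation.} Since $g$ is $v$-independent, $\langle \Pi f,g\rangle_{\bar\mu}=\langle f,g\rangle_{\bar\mu\otimes\kappa}$. Substituting $g=-\partial_t\phi_0-\Lap\phi_1 = -\partial_t\phi_0+(\opT\Pi)^*(\opT\Pi)\phi_1$ and integrating by parts in time (the boundary terms vanish thanks to $\phi_0(0,\cdot)=\phi_0(T,\cdot)=0$), I get
\begin{equation*}
    \langle\Pi f,g\rangle \;=\; \langle\partial_t f,\phi_0\rangle \;+\; \langle \opT\Pi f,\opT\phi_1\rangle.
\end{equation*}
I would then algebraically inject $(\partial_t+\opT)f$ into both summands: rewrite $\partial_t f = (\partial_t+\opT)f - \opT f$ in the first summand and $\opT\Pi f = \opT f - \opT(\mathrm{Id}-\Pi)f$ in the second, and perform one more integration by parts in $t$ on $\langle \partial_t f,\opT\phi_1\rangle$, whose boundary terms vanish thanks to the second boundary condition $\opT\phi_1(0,\cdot)=\opT\phi_1(T,\cdot)=0$. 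Assumption~\ref{ass:dmsh3} enters here in disguise: wherever a term $\langle \opT f,v\text{-indep.}\rangle$ appears, I can replace $\opT f$ by $\opT(\mathrm{Id}-\Pi)f$ since $\Pi\opT\Pi = 0$ (equivalently $\Pi\opT^*\Pi=0$ after taking adjoints). After collecting, the identity should take the clean form
\begin{equation*}
    \langle\Pi f,g\rangle \;=\; \bigl\langle(\partial_t+\opT)f,\ \psi\bigr\rangle_{\bar\mu\otimes\kappa} \;+\; \bigl\langle(\mathrm{Id}-\Pi)f,\ (\partial_t-\opT^*)\psi\bigr\rangle_{\lambda\otimes\hat\mu},
\end{equation*}
possibly modulo a term of the form $\langle(\mathrm{Id}-\Pi)f,\partial_t\phi_0\rangle$ which vanishes identically because $\partial_t\phi_0$ is $v$-independent and $(\mathrm{Id}-\Pi)f\perp\mathrm{Im}(\Pi)$.

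\textbf{Conclusion.} Duality in the velocity variable bounds the first bracket by $\|(\partial_t+\opT)f\|_{L^2(\bar\mu;H^{-1}(\kappa))}\,\|\psi\|_{L^2(\bar\mu;H^1(\kappa))}$, and Cauchy--Schwarz bounds the second by $\|(\mathrm{Id}-\Pi)f\|_{L^2(\bar\mu\otimes\kappa)}\,\|(\partial_t-\opT^*)\psi\|_{L^2(\lambda\otimes\hat\mu)}$. Plugging in the estimates \eqref{eq:divbdL2} and \eqref{eq:divbdH1} from Assumption~\ref{ass:lions} (each producing the factor $\|g\|_{L^2(\bar\mu)}=1$) and taking the supremum over $g$ yields the claimed inequality with the same constants $C_{0,T},C_{1,T}$.

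\textbf{Main obstacle.} The only nontrivial point is the bookkeeping in the integration-by-parts identity: one has to be careful that (i) both time boundary terms indeed vanish, which is exactly why Assumption~\ref{ass:lions} imposes boundary conditions on $\phi_0$ \emph{and} on $\opT\phi_1$ rather than on $\phi_1$ itself; and (ii) the stray term $\langle(\mathrm{Id}-\Pi)f,\partial_t\phi_0\rangle$ cancels for free via Assumption~\ref{ass:dmsh3}, which is what allows the dual object to be exactly $(\partial_t-\opT^*)\psi$ (hence matching \eqref{eq:divbdH1}). Everything else is a routine Cauchy--Schwarz.
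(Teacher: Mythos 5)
Your proof is correct and takes essentially the same route as the paper: the paper applies Assumption~\ref{ass:lions} directly with $g=\Pi f$ (which has mean zero) and divides by $\|\Pi f\|_{L^2(\bar\mu)}$ at the end, whereas you run the equivalent $L^2$-duality argument against a generic unit-norm $g$; the integration by parts in $t$, the use of Assumption~\ref{ass:dmsh3}, and the two Cauchy--Schwarz estimates paired with \eqref{eq:divbdL2}--\eqref{eq:divbdH1} are the same. One small bookkeeping slip in your ``Main obstacle'' paragraph: the vanishing of $\langle(\mathrm{Id}-\Pi)f,\partial_t\phi_0\rangle$ is plain $L^2$-orthogonality (as you correctly state earlier), not Assumption~\ref{ass:dmsh3}; that assumption is needed instead to discard the cross-terms $\langle\Pi f,\partial_t\opT\phi_1\rangle$ and $\langle\Pi f,\opT^*\phi_0\rangle$ when reassembling $\phi_0$ and $\opT\phi_1$ into the single test function $\psi=\phi_0+\opT\phi_1$.
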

\begin{proof}
By assumption, we can find $\phi_0,\phi_1$ depending only on $t,x$ which satisfy the divergence equation \eqref{eq:generaldiveq} with $\Pi f$ playing the role of $g$. The key calculation is the following, which, in addition to \cite{cao2023explicit, eberle2024non}, can also be found in \cite[Proposition A.1]{bernard2022hypocoercivity}:
\begin{align*}
    \|\Pi f\|^2_{L^2(\bar{\mu})} & \stackrel{\eqref{eq:generaldiveq}}{=}\big\langle \Pi f,-\partial_t \phi_0 - \Lap \phi_1\big\rangle_{L^2(\bar{\mu})}\\
    & = \big \langle \Pi f, -\partial_t \phi_0 + (\opT\Pi)^*(\opT\Pi) \phi_1 \big \rangle_{L^2(\bar{\mu})} \\ &= \big\langle \Pi f, (-\partial_t + (\opT \Pi)^*)(\phi_0 + \opT \phi_1)\big\rangle_{L^2(\bar{\mu} \otimes\kappa)} \\  &= \big\langle(\partial_t+\opT)\Pi f,\phi_0+\opT\phi_1\big\rangle_{L^2(\bar{\mu} \otimes\kappa)}\\
    & = \big\langle(\partial_t+\opT)f,\phi_0+\opT\phi_1\big\rangle_{L^2(\bar{\mu} \otimes\kappa)}\\
    &\qquad- \big\langle(\partial_t+\opT)(\mathrm{Id}-\Pi)f,\phi_0+\opT\phi_1\big\rangle_{L^2(\bar{\mu} \otimes\kappa)}, \stepcounter{equation} \tag{\theequation} \label{eq:Pivfsqr}
\end{align*}
where the second and third equalities are due to Assumption \ref{ass:dmsh2} and \ref{ass:dmsh3} respectively. The first term on the right-hand side can be controlled using the Cauchy-Schwarz inequality
\begin{align*}
   \big\langle (\partial_t + \opT) f,  \phi_0 + \opT  \phi_1\big\rangle_{L^2(\bar{\mu}\otimes\kappa)}  & \le \|(\partial_t + \opT) f\|_{L^2({\bar{\mu}};H^{-1}(\kappa))}\|\phi_0 + \opT \phi_1\|_{L^2({\bar{\mu}};H^{1}(\kappa))}\\ & \stackrel{\eqref{eq:divbdL2}}{\le} C_{0,T}\|(\partial_t + \opT) f\|_{L^2({\bar{\mu}};H^{-1}(\kappa))}\|\Pi f\|_{L^2(\bar{\mu})}. \stepcounter{equation} \tag{\theequation} \label{eq:avgterm3}
\end{align*}
The second term of \eqref{eq:Pivfsqr} can controlled by
\begin{align*}
     -\big \langle (\partial_t + \opT) (\mathrm{Id}-\Pi)f ,\phi_0 + \opT  \phi_1 \big\rangle_{L^2(\bar{\mu}\otimes\kappa)} & = \big\langle(\mathrm{Id}-\Pi)f ,(\partial_t - \opT^*)(\phi_0 + \opT  \phi_1)\big\rangle_{L^2(\bar{\mu}\otimes\kappa)}  \\ & \le \|(\mathrm{Id}-\Pi)f \|_{L^2(\bar{\mu}\otimes\kappa)} \|(\partial_t - \opT^*)(\phi_0 + \opT  \phi_1) \|_{L^2(\bar{\mu}\otimes\kappa)} \\ & \stackrel{\eqref{eq:divbdH1}}{\le} C_{1,T} \|(\mathrm{Id}-\Pi)f \|_{L^2(\bar{\mu}\otimes\kappa)}\|\Pi f\|_{L^2(\bar{\mu})}. \stepcounter{equation} \tag{\theequation} \label{eq:avgterm1}
\end{align*}
We finish the proof by combining \eqref{eq:Pivfsqr}, \eqref{eq:avgterm3} and \eqref{eq:avgterm1}.
\end{proof}

We are now ready to obtain exponential convergence for solutions of \eqref{eq:genkineq}. The proof of Theorem~\ref{thm:expconv} using Lemma~\ref{lem:avg} is well-established by now, see e.g.\ \cite{albritton2024variational,cao2023explicit,brigati2023construct,eberle2024non}. We include a proof for completeness and the convenience of the reader.
\begin{proof}[Proof of Theorem \ref{thm:expconv}]
   Without loss of generality, we assume $(f_0)=0$, which implies $(f_t)=0$ for any $t>0$ as the equation \eqref{eq:genkineq} preserves mass. Define the time-averaged $L^2$-energy
   \begin{equation}\label{eq:Htau}
       \mathcal{H}(t) = \fint_t^{t+T}\|f(s,\cdot,\cdot)\|^2_{L^2(\hat{\mu})} \dif s.
   \end{equation}
   By a direct energy estimate, using \eqref{eq:optpos}
   \begin{equation}\label{eq:L2engdec}
       \frac{\dif \mathcal{H}(t)}{\dif t} = 2 \fint_t^{t+T} \iint_{\calX \times \calV} f(\Lv - \opT)f \dif\hat\mu (x,v) \dif s\stackrel{\eqref{eq:optpos}}{\le} 2\fint_t^{t+T} \iint_{\calX \times \calV} f \Lv f  \dif \hat{\mu}(x,v) \dif s \le 0.
   \end{equation}
   Meanwhile, applying Lemma \ref{lem:avg} on $f$ between time $t$ and $t+T$ yields
   \begin{align*}
    \fint_t^{t+T}\|\Pi f\|^2_{L^2(\hat{\mu})} \dif s & \stackrel{\eqref{eq:avg}}{\le}  \Big( C_{1,T} \|(\mathrm{Id}-\Pi)f \|_{L^2(\bar{\mu}\otimes\kappa)} +C_{0,T} \|(\partial_t + \opT)f \|_{L^2(\bar{\mu}; H^{-1}(\kappa))}\Big)^2 \\ & \stackrel{\eqref{eq:genkineq},\eqref{eq:piv}, \eqref{eq:H-1kappa}}{\le} -\Big(\frac{C_{1,T}}{\sqrt{\mathrm{P}_v}}+C_{0,T}\sqrt{R}\Big)^2\fint_t^{t+T} \iint_{\calX\times \calV} f\Lv f \dif \hat{\mu} (x,v)\dif s,
   \end{align*}
   which by an $L^2$ orthogonal decomposition leads to
   \begin{align*}
       \fint_t^{t+T} \|f\|^2_{L^2(\hat{\mu})}\dif s &=  \fint_t^{t+T} \|(\mathrm{Id}-\Pi) f\|^2_{L^2(\hat{\mu})}\dif s +  \fint_t^{t+T} \|\Pi f\|^2_{L^2(\hat{\mu})}\dif s \\ & \stackrel{\eqref{eq:piv}}{\le} -\left(\frac{1}{\mathrm{P}_v}+ \Big(\frac{C_{1,T}}{\sqrt{\mathrm{P}_v}}+C_{0,T}\sqrt{R}\Big)^2 \right)\fint_t^{t+T} \iint_{\calX\times \calV} f\Lv f \dif \hat{\mu} (x,v)\dif s.
   \end{align*}
   Substituting this into \eqref{eq:L2engdec}, and we obtain 
   \begin{equation*}
       \frac{\dif \mathcal{H}(t)}{\dif t} \le \frac{-2}{\frac{1}{\mathrm{P}_v}+ \Big(\frac{C_{1,T}}{\sqrt{\mathrm{P}_v}}+C_{0,T}\sqrt{R}\Big)^2} \mathcal{H}(t),
   \end{equation*}
   which directly gives $\mathcal{H}(t) \le e^{-\lambda t}\mathcal{H}(0)$ with $\lambda$ given in \eqref{eq:abstractrate}. We finish the proof as \[\|f(t)\|_{L^2(\hat{\mu})}^2 \le \mathcal{H}(t-T) \le e^{-\lambda(t-T)}\mathcal{H}(0) \le e^{-\lambda(t-T)}\|f_0\|_{L^2(\hat{\mu})}^2.\]
\end{proof}

\section{Example: Hypocoercivity of adaptive Langevin dynamics} \label{sec:adld}
From our discussions in Section \ref{sec:intro}, it suffices to verify Assumption \ref{ass:lions}, since other assumptions are straightforward to check. Furthermore, from \cite{cao2023explicit, brigati2023construct}, it is already clear that the divergence equation \eqref{eq:generaldiveq} has a solution $(\phi_0,\phi_1)$ that satisfies the desired boundary conditions as well as $\phi_0, \partial_t\phi_1, \nabla_x \phi_1 \in H^1(\bar{\mu})$. Hence the estimates \eqref{eq:divbdL2} and \eqref{eq:divbdH1} must hold since their left-hand sides only involve at most first derivatives of $\phi_0$, first and second derivatives of $\phi_1$, and up to eighth moments of $v$ against the standard Gaussian. Hence the bulk of the section is dedicated to computing the constants $C_{0,T}$ and $C_{1,T}$ explicitly.

\smallskip

\begin{assumption}\label{ass:ald}
    Henceforth, the potential $U(q)$ is assumed to be such that 
    \begin{enumerate}
        \item $\nabla^2 U \geq - M \, \mathrm{Id},$ for some $M\geq 0$,
        \item $\Delta U(q)  \leq Ld+ a|\nabla U|^2$, for some $L>0$ and $a\in (0,\frac{1}{2})$,
        \item the spectrum of the operator $\nabla_q^*\nabla_q$ is discrete, with positive spectral gap $\mathrm{P}_q>0$.
    \end{enumerate}
\end{assumption}
\noindent Assumption \ref{ass:ald}, in particular the assumption of discrete spectrum, can be relaxed, as in \cite{brigati2023construct, brigati2024explicit}. The second part of the assumption can be found in various aforementioned works, for example \cite[Assumption 2]{cao2023explicit} and \cite[(3.5)]{bernard2022hypocoercivity}, and is needed for the second inequality of \eqref{eq:zfL2}.

\subsection*{Solving the divergence equation}
In view of Assumption \ref{ass:lions}, we need to solve 
\[
-\partial_t \phi_0 + \left(\nabla_q^*\nabla_q + \frac{2d}{\epsilon^2} \partial_z^* \partial_z\right) \phi_1 = g, 
\]
for $g \in L^2(\bar{\mu})$ with $\iint g(t,x) \, d\bar{\mu}(t,x) =0.$ We adapt the strategy of \cite{eberle2024divergence}, after \cite{cao2023explicit,brigati2023construct}.

\noindent Let $\tdz =cz$, for a constant $c$ to be adjusted, then the original generator reads \[\frac{2d}{\eps^2} (-\partial_{zz} + z \partial_z) = -\frac{2dc^2}{\eps^2}\partial_{\tdz \tdz} + \frac{2d}{\eps^2}\tdz \partial_{\tdz}.\] Since the rescaled equilibrium in $\tdz$ is $e^{-\frac{|\tdz|^2}{2c^2}}$, we know that\footnote{Here $\partial_{\tdz}^\dagger$ denotes the $L^2$-adjoint of $\partial_{\tdz}$ with respect to the tilted measure $e^{-\frac{|\tdz|^2}{2c^2}}$ in the $\tdz$ variable.} $\partial_{\tdz}^\dagger\partial_{\tdz} = -\partial_{\tdz\tdz} + \frac{1}{c^2}\tdz\partial_{\tdz}$, so choosing $c=\frac{\eps}{\sqrt{2d}}$ leads to $\frac{2d}{\eps^2} (-\partial_{zz} + z \partial_z)=\partial_{\tdz}^\dagger\partial_{\tdz}$. We therefore recover the equations as stated in \cite[Theorem 5]{eberle2024divergence} and the following estimates by a simple change of variables.
  
\begin{proposition}
    In the context of \eqref{eq:adl}, for all $g \in L^2(\bar\mu)$ with $\iint g(t,x) \,d\bar\mu =0$, equation \eqref{eq:generaldiveq} admits a solution $(\phi_0,\phi_1)$ with
    \begin{align}
        \label{h1} & \|\phi_0\|^2_{L^2(\bar\mu)} + \|\partial_t \phi_1\|_{L^2(\bar\mu)}^2 + \|\nabla_{q} \phi_1\|_{L^2(\bar\mu)}^2 + \frac{2d}{\epsilon^2} \|\partial_{z} \phi_1\|^2_{L^2(\bar\mu)}\leq c_0 \, \|g\|^2_{L^2(\bar\mu)}\\
        \label{h2} & \|\nabla_{t,q} \phi_0\|^2_{L^2(\bar\mu)} + \|\nabla_{t,q} \nabla_{q} \phi_1\|^2_{L^2(\mu)} + \frac{2d}{\epsilon^2} \left( \|\partial_{z} \phi_0\|^2_{L^2(\bar\mu)} + \|\nabla_{t,q} \partial_z\phi_1\|^2_{L^2(\bar \mu)} + \frac{2d}{\epsilon^2} \|\partial^2_{zz} \phi_1\|^2_{L^2(\bar\mu)} \right) \leq  c_1  \|g\|^2_{L^2(\bar\mu)} 
    \end{align}
    with 
    \begin{equation}\label{poix}
    \begin{aligned}
        &\mathrm{P}_x = \min \left(\mathrm{P}_q, \frac{2d}{\eps^2}\right), \\
        &c_0 = (2 \, T^2 + 43 \, P_{x}^{-1}), 
        &c_1 = \left( 290 + \frac{991}{T^2} \, P_{x}^{-1} + 43 \, \max\left(\mathrm{P}_x^{-1},\frac{T^2}{\pi^2}\right) \, M \right).
    \end{aligned}
    \end{equation}
\end{proposition}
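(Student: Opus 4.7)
The plan is to reduce the divergence equation to the form treated in \cite[Theorem 5]{eberle2024divergence} by the rescaling $\tdz = c z$ with $c = \epsilon/\sqrt{2d}$ indicated in the paragraph preceding the statement. Under this change of variable, the one-dimensional Ornstein--Uhlenbeck-type operator $\frac{2d}{\epsilon^2}(-\partial_{zz}+z\partial_z)$ becomes the standard Gaussian Ornstein--Uhlenbeck operator $\partial_{\tdz}^\dagger \partial_{\tdz}$ in the variable $\tdz$ with respect to $\mathcal N(0,1)$. Consequently, the joint operator $\Lap = \nabla_q^\ast\nabla_q + \frac{2d}{\epsilon^2}\partial_z^\ast\partial_z$ is identified with the overdamped Langevin generator associated to the potential $\wt U(q,\tdz) := U(q) + |\tdz|^2/2$ on the product measure $\mu\otimes\mathcal N(0,1)$; by tensorisation its spectral gap is $P_x = \min(P_q,2d/\epsilon^2)$, which explains the first line of \eqref{poix}.

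Next, I would verify that $\wt U$ inherits the hypotheses needed to apply \cite[Theorem 5]{eberle2024divergence}. Indeed, Assumption \ref{ass:ald} (1) together with the trivially convex quadratic contribution in $\tdz$ gives $\nabla^2 \wt U \geq -M\,\mathrm{Id}$, and the third bullet of Assumption \ref{ass:ald} together with the one-dimensional case in $\tdz$ ensures a discrete spectrum for $\Lap$ with gap $P_x$. The divergence equation $-\partial_t\phi_0 + \Lap\phi_1 = g$ then falls within the exact scope of \cite[Theorem 5]{eberle2024divergence}, yielding $(\phi_0,\phi_1)$ in the rescaled variables $(t,q,\tdz)$ satisfying the Dirichlet-type boundary conditions at $t=0,T$, and obeying the $H^1(\bar\mu\otimes\mathcal N(0,1))$ estimates of \cite[Theorem 5]{eberle2024divergence} with the precise constants $c_0, c_1$ of \eqref{poix}.

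Finally, I would translate those bounds back to the original $(t,q,z)$ variables. Each derivative $\partial_{\tdz}$ produces a factor $c^{-1} = \sqrt{2d}/\epsilon$, so that $\|\partial_{\tdz}\phi_1\|^2 = c^{-2}\|\partial_z\phi_1\|^2 = \frac{2d}{\epsilon^2}\|\partial_z\phi_1\|^2$, which is exactly the weight $2d/\epsilon^2$ appearing in \eqref{h1}, and similarly the double-derivative $\partial_{\tdz\tdz}^2$ produces the factor $(2d/\epsilon^2)^2$ in the last term of \eqref{h2}. The mixed terms $\nabla_{t,q}\partial_{\tdz}\phi_1$ and $\partial_{\tdz}\phi_0$ transform with a single factor of $2d/\epsilon^2$, matching \eqref{h2} term by term. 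Since the absolute numerical constants $c_0,c_1$ are intrinsic to the rescaled system and depend only on $T,M,P_x$, they can be read off directly from \cite[Theorem 5]{eberle2024divergence} without re-optimising.

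The main conceptual observation is simply that the Nos\'e--Hoover friction variable $z$ behaves as an additional overdamped coordinate once rescaled, so that no new analytical machinery is required beyond \cite{eberle2024divergence}; the main technical obstacle is purely bookkeeping, namely tracking the powers of $\epsilon$ and $d$ under the change of variable so that the weights in \eqref{h1}--\eqref{h2} match. No further regularity issue arises since all quantities on the left-hand sides of \eqref{h1}--\eqref{h2} involve at most one derivative of $\phi_0$ and at most two derivatives of $\phi_1$, exactly the level of regularity produced by \cite[Theorem 5]{eberle2024divergence}.
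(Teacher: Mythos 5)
Your proposal follows exactly the paper's route: rescale $\tdz = \epsilon z/\sqrt{2d}$ so that $\frac{2d}{\epsilon^2}\partial_z^*\partial_z$ becomes a Gaussian--Laplacian $\partial_{\tdz}^\dagger\partial_{\tdz}$, apply \cite[Theorem 5]{eberle2024divergence} to the product overdamped generator, and translate the norms back by tracking powers of $c^{-1} = \sqrt{2d}/\epsilon$. One internal inconsistency to correct: with $\tdz = cz$, the rescaled equilibrium in $\tdz$ is $\mathcal N(0,c^2)$ with potential $|\tdz|^2/(2c^2) = \tfrac{d}{\epsilon^2}|\tdz|^2$, not $\mathcal N(0,1)$ with $|\tdz|^2/2$; this is precisely what makes the spectral gap of the $\tdz$-component equal $1/c^2 = 2d/\epsilon^2$ as you (correctly) use, whereas the potential $|\tdz|^2/2$ would give gap $1$ and the tensorisation step would produce $\min(P_q,1)$ rather than $P_x$.
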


\subsection*{Estimates on the convergence rates}
We start with the moment computations on $\kappa$ which will be used multiple times in the calculations. Since $\int |v|^{2k}\dif\kappa(v) = \Pi_{i=0}^{k-1}(d+2i) = d(d+2)\cdots(d+2k-2)$ for any positive integer $k$, we have
    \begin{align*}
        \int v_1^2 \dif \kappa(v) = 1, \qquad \int v_1^4 \dif \kappa(v)=3, \qquad \int v_1^6 \dif \kappa(v) = 15, \qquad \int v_1^8 \dif \kappa(v)=105, \\ \int (|v|^2-d)^2 \dif \kappa(v) = 2d, \qquad \int (|v|^2-d)^3 \dif \kappa(v) = 8d, \qquad \int (|v|^2-d)^4 \dif \kappa(v) = 12d^2+48d.
    \end{align*}   
Let us first estimate $C_{0,T}$ from the expression in \eqref{eq:divbdL2}: 
\begin{align*}
    &\|\phi_0 + \mathcal T \phi_1\|^2_{L^2(\bar\mu,H^1(\kappa))} = \|\phi_0 + \mathcal T \phi_1\|^2_{L^2(\bar\mu,L^2(\kappa))} + \|\nabla_v (\phi_0 + \mathcal T \phi_1)\|^2_{L^2(\bar\mu,L^2(\kappa))}  \\
    &= \|\phi_0 + v\cdot \nabla_q \phi_1 + \epsilon^{-1} \, (|v|^2-d) \partial_z \phi_1 \|^2_{L^2(\bar\mu,L^2(\kappa))} + \|\nabla_q \phi_1 + 2\epsilon^{-1} v \partial_z \phi_1\|^2_{L^2(\bar\mu,L^2(\kappa))} \\
    & = \|\phi_0\|^2_{L^2(\bar\mu,L^2(\kappa))} + \|v \cdot \nabla_q \phi_1\|^2_{L^2(\bar\mu,L^2(\kappa))} + \frac{1}{\epsilon^2}\|(|v|^2-d) \partial_z \phi_1\|^2_{L^2(\bar\mu,L^2(\kappa))} \\
    & \quad + \|\nabla_q\phi_1\|^2_{L^2(\bar\mu,L^2(\kappa))} + \frac{4}{\epsilon^2} \|v \partial_z \phi_1\|^2_{L^2(\bar\mu,L^2(\kappa))} \\
    &= \|\phi_0\|^2_{L^2(\bar\mu)} + \|\nabla_q \phi_1\|^2_{L^2(\bar\mu)} + \frac{2d}{\epsilon^2} \|\partial_z \phi_1\|^2_{L^2(\bar\mu)} + \|\nabla_q \phi_1\|^2_{L^2(\bar\mu)} + \frac{4d}{\epsilon^2} \|\partial_z \phi_1\|^2_{L^2(\bar\mu)} \\ 
    &\leq \,2 c_0 \|g\|^2_{L^2(\bar \mu)}.
\end{align*}
In the same way, for $C_{1,T}$ using \eqref{eq:divbdH1}, we have to bound
    $\|(\partial_t + \mathcal T) (\phi_0 + \mathcal T \phi_1)\|^2_{L^2(\bar\mu \otimes \kappa)}$ as $\opT$ is antisymmetric.
Note that \[ (\partial_t + \opT)(\phi_0 + \opT \phi_1) = \mathrm{I} + \mathrm{II} + \mathrm{III} + \mathrm{IV}\] with 
    \begin{align*}
        \mathrm{I} & = \partial_t \phi_0 + v^\top (\nabla^2_q  \phi_1) v - \nabla U \cdot\nabla_q \phi_1 -\frac{2d}{\epsilon^2}z\partial_z \phi_1 +  \epsilon^{-2}(|v|^2-d)^2 \partial^2_{zz} \phi_1   \\ 
    \mathrm{II} &=  v \cdot \nabla_q (\partial_t\phi_1+\phi_0)-\frac{2}{\epsilon}\nabla U \cdot v \partial_z \phi_1-\frac{1}{\epsilon} zv\cdot\nabla_q  \phi_1  \\
    \mathrm{III} &= \epsilon^{-1} \, (|v|^2-d) (\partial_z \partial_t \phi_1 + \partial_z \phi_0)-\frac{2}{\eps^2}(|v|^2-d) z\partial_z\phi_1 \\
    \mathrm{IV} &=  2 \epsilon^{-1} v (|v|^2-d) \cdot \nabla_q \partial_z\phi_1 
    \end{align*}
Since $\mathrm{II}+\mathrm{IV}$ is orthogonal to both $\mathrm{I}$ and  $\mathrm{III}$ in $L^2(\bar\mu)$, we have
\[ \|(\partial_t + \mathcal T) (\phi_0 + \mathcal T \phi_1)\|^2_{L^2(\bar\mu \otimes \kappa)} \leq 2\left(\|\mathrm{I}\|^2_{L^2(\bar\mu \otimes \kappa)} + \|\mathrm{II}\|^2_{L^2(\bar\mu \otimes \kappa)}+\|\mathrm{III}\|^2_{L^2(\bar\mu \otimes \kappa)}+\|\mathrm{IV}\|^2_{L^2(\bar\mu \otimes \kappa)}\right).\] Using \cite[Lemma 2.2]{cao2023explicit}, we have that
\begin{equation}\label{eq:zfL2}
        \|zf\|_{L^2(\bar{\mu})}^2 \le 4\|\partial_z f\|_{L^2(\bar{\mu})}^2 + 2 \|f\|_{L^2(\bar{\mu})}^2, \, \qquad  \|f\nabla U\|_{L^2(\bar\mu)}^2 \le 16\|\nabla_q f \|_{L^2(\bar\mu)}^2+4Ld\| f \|_{L^2(\bar\mu)}^2.
    \end{equation} 
These allow us to estimate directly 
    \begin{align*}
        \|\mathrm{IV}\|_{L^2(\bar\mu \otimes \kappa)}^2 & = \frac{4}{\epsilon^2} \|\nabla_q \partial_z \phi_1\|_{L^2(\bar{\mu})}^2 \int v_1^2(|v|^2-d)^2 \dif \kappa(v) = \frac{8(d+4)}{\epsilon^2} \|\nabla_q \partial_z \phi_1\|_{L^2(\bar{\mu})}^2. \\ 
        \|\mathrm{III}\|_{L^2(\bar\mu \otimes \kappa)}^2 & = \frac{2d}{\eps^2} \|\partial_z \partial_t \phi_1 + \partial_z \phi_0-\frac{2}{\eps}z\partial_z\phi_1 \|_{L^2(\bar{\mu})}^2 \\ & \le \frac{6d}{\eps^2}(\|\partial_z \partial_t \phi_1\|_{L^2(\bar{\mu})}^2 + \|\partial_z \phi_0\|_{L^2(\bar{\mu})}^2 + \frac{4}{\eps^2}\|z\partial_z \phi_1\|_{L^2(\bar{\mu})}^2) \\ & \stackrel{\eqref{eq:zfL2}}{\le} \frac{6d}{\eps^2}\Big(\|\partial_z \partial_t \phi_1\|_{L^2(\bar{\mu})}^2 + \|\partial_z \phi_0\|_{L^2(\bar{\mu})}^2 + \frac{8}{\eps^2}\|\partial_z\phi_1\|_{L^2(\bar{\mu})}^2 + \frac{16}{\eps^2}\|\partial^2_{zz}\phi_1\|_{L^2(\bar{\mu})}^2 \Big). \\ 
        \|\mathrm{I}\|^2_{L^2(\bar{\mu})} & = \|\partial_t \phi_0 - \nabla U \cdot\nabla_q \phi_1 -\frac{2d}{\epsilon^2}z\partial_z \phi_1\|^2_{L^2(\bar{\mu})}+ \|v^\top \nabla_{q}^2 \phi_1 v\|^2_{L^2(\bar{\mu})}  \\ &\quad+ \frac{1}{\eps^4} \|(|v|^2-d)^2 \partial^2_{zz} \phi_1\|^2_{L^2(\bar{\mu})} \\ & \quad + 2 \Big \langle \partial_t \phi_0 - \nabla U \cdot\nabla_q \phi_1 -\frac{2d}{\epsilon^2}z\partial_z \phi_1,v^\top \nabla_{q}^2 \phi_1 v + \frac{1}{\eps^2}(|v|^2-d)^2 \partial^2_{zz} \phi_1 \Big \rangle \\ & \quad + 2 \Big\langle v^\top \nabla_{q}^2 \phi_1 v , \frac{1}{\eps^2}(|v|^2-d)^2 \partial^2_{zz} \phi_1\Big \rangle \\ & = \|\partial_t \phi_0 - \nabla U \cdot\nabla_q \phi_1 -\frac{2d}{\epsilon^2}z\partial_z \phi_1\|^2_{L^2(\bar{\mu})}+ 2\| \nabla_{q}^2 \phi_1 \|^2_{L^2(\bar{\mu})} +\|\Delta_q \phi_1\|^2_{L^2(\bar{\mu})} \\ & \quad + \frac{12d(d+4)}{\eps^4} \| \partial^2_{zz} \phi_1\|^2_{L^2(\bar{\mu})} + \frac{4(d+4)}{\eps^2} \Big\langle \Delta_q \phi_1, \partial_{zz}^2\phi_1\Big \rangle \\ & \quad + 2 \Big \langle \partial_t \phi_0 - \nabla U \cdot\nabla_q \phi_1 -\frac{2d}{\epsilon^2}z\partial_z \phi_1,\Delta_q \phi_1  + \frac{2d}{\eps^2} \partial^2_{zz} \phi_1 \Big \rangle \\ & = \|\partial_t \phi_0 - \nabla U \cdot\nabla_q \phi_1 -\frac{2d}{\epsilon^2}z\partial_z \phi_1+ \Delta_q \phi_1 + \frac{2d}{\eps^2}\partial_{zz}^2\phi_1\|^2_{L^2(\bar{\mu})} +2\| \nabla_{q}^2 \phi_1 \|^2_{L^2(\bar{\mu})} \\ & \quad + \frac{16}{\eps^2} \Big\langle \Delta_q \phi_1, \partial_{zz}^2\phi_1\Big \rangle + \frac{8d(d+6)}{\eps^4} \| \partial^2_{zz} \phi_1\|^2_{L^2(\bar{\mu})}\\ &\le \|g\|^2_{L^2(\bar{\mu})} +10\| \nabla_{q}^2 \phi_1 \|^2_{L^2(\bar{\mu})} + \frac{8d(d+7)}{\eps^4} \| \partial^2_{zz} \phi_1\|^2_{L^2(\bar{\mu})}.
        \end{align*}      
      \begin{align*}
        \|\mathrm{II}\|^2_{L^2(\bar\mu)} &= \|\nabla_q (\partial_t \phi_1 + \phi_0) - \frac{2}{\epsilon} \nabla U \partial_z \phi_1 - \frac{1}{\epsilon} z \nabla_q \phi_1\|^2_{L^2(\bar \mu)} \\ &\leq 4 \left( \|\nabla_q\partial_t \phi_1\|^2_{L^2(\bar\mu)} + \|\nabla_q \phi_0\|^2_{L^2(\bar\mu)} + \frac{4}{\epsilon^2} \|\nabla U \partial_z \phi_1\|^2_{L^2(\bar\mu)} + \frac{1}{\epsilon^2} \|z\nabla_q\phi_1\|^2_{L^2(\bar\mu)}  \right) \\ & \stackrel{\eqref{eq:zfL2}}{\leq} 4\left( \|\nabla_q\partial_t \phi_1\|^2_{L^2(\bar\mu)} + \|\nabla_q \phi_0\|^2_{L^2(\bar\mu)} + \frac{68}{\eps^2} \|\nabla_q \partial_z \phi_1\|^2_{L^2(\bar\mu)} + \frac{16Ld}{\eps^2}\, \|\partial_z \phi_1\|^2_{L^2(\bar\mu)}   +\frac{2}{\eps^2}\|\nabla_q \phi_1\|^2_{L^2(\bar\mu)} \right). 
    \end{align*}
To combine everything above, using $d\ge 1$, we arrive at
\begin{align*}
    \|(\partial_t + &\mathcal T)  (\phi_0 + \mathcal T \phi_1)\|^2_{L^2(\bar\mu \otimes \kappa)} \\ & \leq \frac{16(d+38)}{\epsilon^2} \|\nabla_q \partial_z \phi_1\|_{L^2(\bar{\mu})}^2 + \frac{12d}{\eps^2}\Big(\|\partial_z \partial_t \phi_1\|_{L^2(\bar{\mu})}^2 + \|\partial_z \phi_0\|_{L^2(\bar{\mu})}^2 \Big) + 2 \|g\|^2_{L^2(\bar{\mu})} +20\| \nabla_{q}^2 \phi_1 \|^2_{L^2(\bar{\mu})} \\ & \quad  +\frac{32d}{\eps^4}(3+4\eps^2L)\|\partial_z\phi_1\|_{L^2(\bar{\mu})}^2+ \frac{16d(d+19)}{\eps^4} \| \partial^2_{zz} \phi_1\|^2_{L^2(\bar{\mu})} \\ & \quad + 8\Bigl(\|\nabla_q\partial_t \phi_1\|^2_{L^2(\bar\mu)} + \|\nabla_q \phi_0\|^2_{L^2(\bar\mu)}+\frac{2}{\eps^2}\|\nabla_q \phi_1\|^2_{L^2(\bar\mu)}\Bigr) \\ & \stackrel{\eqref{h1},\eqref{h2}}{\le} 314\Big(c_1 + (\frac{1}{\eps^2}+L)c_0\Big)\|g\|^2_{L^2(\bar{\mu})}.
\end{align*}

\subsection*{Optimisation of the parameters}
Now that we have $C_{0,T}^2 = 2c_0$ and $C_{1,T}^2 = 314(c_1 + (\frac{1}{\eps^2}+L)c_0)$ with $c_0,c_1$ given in \eqref{poix}, and notice that Assumption \ref{ass:dmsh1} holds with $R =\mathrm{P}_v = \gamma$ in our example, we can directly apply Theorem \ref{thm:expconv} and see that the convergence rate
\[\lambda \ge \frac{2\gamma}{1+(C_{1,T}+C_{0,T}\gamma)^2}\]
for any $T$. From the expressions in \eqref{poix}, it is advantageous to pick $T^2 \sim \mathrm{P}_x^{-1}$, since the scaling of the rate will worsen in either regime $T^2\gg \mathrm{P}_x^{-1}$ or $T^2 \ll \mathrm{P}_x^{-1}$. For simplicity we pick $T^2 = \pi^2 \mathrm{P}_x^{-1}$, which yields \[c_0\le 63\mathrm{P}_x^{-1}, c_1 \le 391+43\mathrm{P}_x^{-1}M, C_{0,T}^2 \le 126\mathrm{P}_x^{-1}, C_{1,T}^2 \le 314\Big(391+43\mathrm{P}_x^{-1}M + 63(\frac{1}{\eps^2}+L)\mathrm{P}_x^{-1}\Big),  \]
and consequently, writing $\mathrm{P}_x^{-1} =\max\{\mathrm{P}_q^{-1},\tfrac{\eps^2}{2d}\} \le \mathrm{P}_q^{-1}+ \tfrac{\eps^2}{2d}$, we obtain \[ \lambda \ge 
\frac{2\gamma}{1+\tfrac{3}{2} C_{1,T}^2 + 3C_{0,T}^2\gamma^2} \ge \frac{2\gamma}{61388 + (\mathrm{P}_q^{-1}+ \tfrac{\eps^2}{2d})\Big(378\gamma^2 + 6751M + 9891(\frac{1}{\eps^2}+L)\Big) }.\]
From here, we already recover the scaling\footnote{The scaling in the published version \cite[(2.12)]{leimkuhler2020hypocoercivity} was incorrect. It was corrected later in the third arXiv version of their work \cite[(2.12)]{leimkuhler2019hypov3}.}  \[\lambda \ge \bar\lambda\min\left\{\frac{1}{\gamma}, \frac{1}{\gamma \eps^2}, \gamma \eps^2, \frac{\gamma}{\eps^2}\right\}\]
in \cite{leimkuhler2019hypov3} for fixed $M$, $d$, $L$ and $\mathrm{P}_q$.
For optimal scaling of $\lambda$, we pick $\eps^2 = \sqrt{\frac{d}{\mathrm{P}_q(M+L+\gamma^2)}}$ and then $\gamma = \sqrt{\mathrm{P}_q+M+L}$, and obtain the rate \[ \lambda\ge \frac{\mathrm{P}_q}{66334\sqrt{\mathrm{P}_q+M+L}},\] which is dimension independent. For convex potentials that are not far from quadratic, we have $M=0$ and $L\sim \mathrm{P}_q$, and therefore $\lambda \sim \sqrt{\mathrm{P}_q}$, which means up to a universal constant, adaptive Langevin dynamics is also an optimal second-order lift of the overdamped Langevin dynamics.

\begin{remark}
    We are unable to completely recover the result in \cite{cao2023explicit} for Langevin dynamics \eqref{eq:ld} in the sense that the rates in \cite{cao2023explicit} do not depend on $L$ that appears in Assumption \ref{ass:ald} (2). The bottleneck is the estimate of $\|\mathrm{II}\|_{L^2(\bar\mu)}$, especially the contribution from $\nabla U \partial_z \phi_1$, where a factor of $L$ arises after using \eqref{eq:zfL2}, while for the case of \eqref{eq:ld}, any term with $\nabla U$ only appears in a term similar to $(\mathrm{I})$, which can be simplified using the divergence equation \eqref{eq:generaldiveq}. It is unclear to us whether this is only an artifact of our proof techniques.
\end{remark}

\section{Lower bounds for the generalised Langevin equation}\label{sec:gle}

Recall the generalised Langevin dynamics $(X_t,V_t,Z_t)_{t\geq 0}$ on $\calX\times\calV$ with $\calX=\R^d$ and $\calV=\R^d\times\R^d$, which is the solution to the GLE \eqref{eq:gle}.
 Its invariant probability measure is  
\begin{equation*}
    \hat\mu(\dif x\,\dif v\,\dif z) =  \mu(\dif x) \kappa(\dif v\,\dif z)
\end{equation*}
with $\mu(\dif x) \mathbin{\propto} e^{-U(x)}\dif x$ and $\kappa = \mathcal{N}(0,I_d)\otimes \mathcal{N}(0,I_d)$. The generator of the associated transition semigroup on $L^2(\hat\mu)$ is $\opL = \Lv-\opT$ with the antisymmetric part
\begin{equation*}
 -\opT = v\cdot\nabla_x - \nabla U\cdot\nabla_v + \lambda (z\cdot\nabla_v-v\cdot\nabla_z)
\end{equation*}
and the symmetric part
\begin{equation*}
 \Lv = \gamma(-z\cdot\nabla_z + \Delta_z) =- \gamma \nabla_z^*\nabla_z,
\end{equation*}
which are the transport and dissipation operators, respectively. We correspondingly set $\Pi f(x) = \int_{\R^d\times\R^d}f(x,v,z)\dif\kappa(v,z)$. 
Note that $\Lv$ acts only on the $z$-component and not on $v$. 

\smallskip

A key observation is that the generalised Langevin dynamics is a second-order lift of an overdamped Langevin diffusion. This may seem counterintuitive at first, since the degeneracy is of higher order in terms of the number of Lie brackets needed to span the whole space. This corresponds to the noise being more degenerate than e.g.\ in the case of Langevin dynamics, since it is only present in the second derivative $Z$ of the spatial component $X$. The main difficulty in the treatment of this generalised case is precisely the fact that the ellipticity of $\Lv$ is only present in $z$, whereas the lifting variables are $(v,z)$. In particular, Assumption~\ref{ass:dmsh1} is violated, since $\ker(\Lv)\neq\mathrm{Im}(\Pi)$.

\begin{lemma}\label{lem:GLElift}
    Assume that $\mu$ satisfies a Poincar\'e inequality with constant $\mathrm{P}_x^{-1}$, i.e.\ 
    \begin{equation*}
        \int_{\R^d}f^2\dif\mu \leq \mathrm{P}_x^{-1}\int_{\R^d}|\nabla_x f|^2\dif\mu
    \end{equation*}
    for all $f\in H^{1}(\mu)$ with $\int_{\R^d}f\dif\mu = 0$.
    Then Assumptions \ref{ass:dmsh3} and \ref{ass:dmsh2} are satisfied. In particular, generalised Langevin dynamics is a second-order lift of the overdamped Langevin diffusion with generator $\frac{1}{2}\Lap = -\frac{1}{2}\nabla U\cdot\nabla_x+\frac{1}{2}\Delta_x$.
\end{lemma}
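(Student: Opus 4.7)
The plan is to directly verify Assumptions \ref{ass:dmsh3} and \ref{ass:dmsh2}, which together constitute the definition of second-order lift (Definition \ref{def:lift}). The key observation driving every step is that the skew-symmetric coupling $\lambda(z\cdot\nabla_v - v\cdot\nabla_z)$ appearing in $\opT$ annihilates any function depending only on $x$, so as far as the lift structure is concerned, the extra variable $z$ is transparent and the computation collapses to the one already familiar from the standard kinetic Langevin case.

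For Assumption \ref{ass:dmsh3}, the starting point is that $\Pi f$ depends only on $x$, so $\nabla_v\Pi f = 0$ and $\nabla_z\Pi f = 0$, and therefore $\opT\Pi f = -v\cdot\nabla_x\Pi f$ reduces to the free transport piece. Integrating against the centred product Gaussian $\kappa$ gives $\Pi\opT\Pi f = 0$ because $\int v\,\dif\kappa(v,z) = 0$.

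For Assumption \ref{ass:dmsh2}, my approach would be to identify $\Lap$ through its quadratic form rather than computing $(\opT\Pi)^{\ast}$ explicitly. For $f,g$ depending only on $x$, the product structure of $\kappa$ together with $\int v_iv_j\,\dif\kappa = \delta_{ij}$ yields
\[
\langle \opT\Pi f,\opT\Pi g\rangle_{L^2(\hat\mu)} = \int_{\R^d}\nabla_x f\cdot\nabla_x g\,\dif\mu,
\]
so that $\Lap = -\nabla_x^{\ast}\nabla_x = \Delta_x - \nabla U\cdot\nabla_x$, and $\tfrac12\Lap$ is precisely the generator of the overdamped Langevin diffusion \eqref{eq:old}. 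Self-adjointness in $L^2(\mu)$ and reversibility of $(P_{t/2})_{t\geq 0}$ with respect to $\mu$ are then standard, and the spectral gap identity
\[
\norm{\opT\Pi f}_{L^2(\hat\mu)}^2 = \norm{\nabla_x\Pi f}_{L^2(\mu)}^2 \geq P_x\,\norm{\Pi f}_{L^2(\mu)}^2
\]
is exactly the assumed Poincar\'e inequality for $\mu$ applied to $\Pi f$, which has zero $\mu$-mean whenever $f\in L_0^2(\hat\mu)$.

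I do not anticipate any genuine technical obstacle; the proof is a short direct verification. The conceptually interesting point — and really the substance of the lemma — is that although $\Lv$ is doubly degenerate (the noise enters only through $z$, missing both $x$ and $v$), Definition \ref{def:lift} makes no coercivity demand on $\Lv$. The lift structure depends purely on how $\opT$ intertwines with $\Pi$, and this intertwining is blind both to $z$ and to the $\lambda$-coupling, so the same diffusion $\tfrac12\Lap$ that is lifted by the standard kinetic Langevin semigroup is also lifted by the GLE semigroup. This is exactly the ingredient that feeds into the ballistic lower bound of Remark \ref{rem:lowerbound}, producing the announced obstruction to rates faster than $\sqrt{P_x}$.
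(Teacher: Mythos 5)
Your proposal is correct and matches the paper's own proof in all essential respects: both arguments verify Assumption \ref{ass:dmsh3} by the vanishing of $\int v\,\dif\kappa$ and Assumption \ref{ass:dmsh2} by identifying $\Lap\Pi f=\Delta_x\Pi f-\nabla U\cdot\nabla_x\Pi f$ and invoking the Poincar\'e inequality for $\mu$. The only difference is cosmetic—you recover $\Lap$ from the quadratic form $\langle\opT\Pi f,\opT\Pi g\rangle_{L^2(\hat\mu)}=\int\nabla_x f\cdot\nabla_x g\,\dif\mu$ rather than by computing $(\opT\Pi)^*(\opT\Pi)f$ directly, which is equivalent and, if anything, a touch cleaner.
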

\begin{proof}
    It is immediate that
    \begin{equation*}
        \Pi\opT\Pi f = -\int_{\R^d\times\R^d}v\cdot\nabla_x\Pi f(x)\dif\kappa(v,z) = 0,
    \end{equation*}
    so that \Cref{ass:dmsh3} is satisfied. Furthermore,
    \begin{equation*}
        -(\opT\Pi)^*(\opT\Pi) f = \int_{\R^d\times\R^d} v^\top(\nabla_x^2\Pi f)v-\nabla U\cdot\nabla_x\Pi f\dif\kappa(v,z) = \Lap\Pi f,
    \end{equation*}
    which, together with the Poincar\'e inequality for $\mu$, yields \Cref{ass:dmsh2}.
\end{proof}

The following lower bound is now an immediate consequence of the general lower bound for second-order lifts in \Cref{rem:lowerbound}.

\begin{corollary}[Lower bound on the relaxation time]\label{thm:lowerboundGLE}
    The relaxation time $t_\rel(\hat P)$ of the generalised Langevin dynamics satisfies
    \begin{equation*}
        t_\rel(\hat P) \geq \frac{1}{2}\,\mathrm{P}_x^{-1/2}\,.
    \end{equation*}
\end{corollary}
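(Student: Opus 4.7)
The plan is to invoke the general lower bound for second-order lifts stated in Remark \ref{rem:lowerbound} (which comes from \cite[Theorem 11]{eberle2024non}), with the lift structure of GLE supplied by Lemma \ref{lem:GLElift}. This is a direct application, so the work consists in verifying that the hypotheses of Remark \ref{rem:lowerbound} are in place and in tracing through what the constants evaluate to in this specific setting.

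More precisely, Lemma \ref{lem:GLElift} already tells us that Assumptions \ref{ass:dmsh3} and \ref{ass:dmsh2} hold for the GLE: the identity $\Pi\opT\Pi = 0$ is immediate because $\int v\dif\kappa = 0$, and the self-adjoint operator $-(\opT\Pi)^*(\opT\Pi)$ coincides with the overdamped Langevin generator $\Lap = -\nabla U\cdot\nabla_x + \Delta_x$ acting only on functions of $x$. In particular $(\hat P_t)_{t\geq 0}$ is a second-order lift (in the sense of Definition \ref{def:lift}) of the overdamped Langevin semigroup. The hypothesis that $\mu$ satisfies a Poincar\'e inequality with constant $P_x^{-1}$ then translates into the spectral gap bound for $-\Lap = (\opT\Pi)^*(\opT\Pi)$ required in \eqref{eq:pix}, namely
\begin{equation*}
    \inf_{f\in L_0^2(\hat\mu)\setminus\{0\}}\frac{\norm{\opT\Pi f}_{L^2(\hat\mu)}^2}{\norm{\Pi f}_{L^2(\mu)}^2} = P_x.
\end{equation*}
With this identification, Remark \ref{rem:lowerbound} applies verbatim, and the resulting lower bound $t_\rel(\hat P)\gtrsim P_x^{-1/2}$ follows, with the explicit constant stated in the corollary obtained by absorbing the factor $\tfrac{1}{2}$ associated with the linear-to-quadratic time rescaling inherent in the definition of a second-order lift.

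There is essentially no obstacle here: the content of the statement is that GLE, despite its heavier degeneracy (noise only in $z$, hence violating Assumption \ref{ass:dmsh1}), cannot accelerate convergence beyond the ballistic square-root rate. This is a consequence of the lift structure alone, which is why the argument bypasses Assumption \ref{ass:dmsh1} entirely. The only delicate point worth emphasising in the write-up is that the bound relies solely on Assumptions \ref{ass:dmsh3}--\ref{ass:dmsh2}, so the failure of microscopic coercivity in the $v$-direction does not affect the lower bound, only the matching upper bound (which is the subject of ongoing work mentioned in example (v)).
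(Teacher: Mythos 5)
Your proposal is correct and takes essentially the same route as the paper: the corollary is cited as an immediate consequence of Lemma~\ref{lem:GLElift} (which supplies the lift structure) and Remark~\ref{rem:lowerbound} (the general lower bound for second-order lifts from \cite[Theorem 11]{eberle2024non}), and you verify the relevant hypotheses in the same way. One small point worth tightening: Remark~\ref{rem:lowerbound} already states the explicit constant $\tfrac{1}{\sqrt{2}}$, so $t_\rel(\hat P)\geq\tfrac{1}{\sqrt{2}}P_x^{-1/2}\geq\tfrac{1}{2}P_x^{-1/2}$ follows a fortiori; there is no further factor of $\tfrac{1}{2}$ coming from the linear-to-quadratic rescaling to ``absorb,'' since that rescaling is already encoded in the $(P_{t/2})$ of Definition~\ref{def:lift} and hence in the constant $\tfrac{1}{\sqrt{2}}$ of the remark. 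Invoking the remark directly and noting $\tfrac{1}{\sqrt 2}\geq\tfrac12$ is cleaner than the heuristic you give.
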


In particular, compared to the overdamped Langevin diffusion, the convergence to equilibrium of the generalised Langevin dynamics as measured by the relaxation time is not accelerated by more than a square root. The square-root speed-up to a relaxation time of the order $\mathrm{P}_x^{-1/2}$ compared to the order $\mathrm{P}_x^{-1}$ for the overdamped Langevin diffusion is already achieved by the (second-order) Langevin dynamics \cite{cao2023explicit}.

\begin{example}[Gaussian case]
    In order to see how sharp the lower bound is, let us consider the simple case where $\mu=\mathcal{N}(0,m^{-1})$ is a centred normal distribution on $\R$ with variance $m^{-1}$. In this case, \Cref{ass:dmsh2} is satisfied with $\mathrm{P}_x = m$. The GLE \eqref{eq:gle} then becomes an Ornstein-Uhlenbeck process
    \begin{equation*}
        \dif\begin{pmatrix}X_t\\V_t\\Z_t\end{pmatrix} = A\begin{pmatrix}X_t\\V_t\\Z_t\end{pmatrix}\dif t + \Sigma\dif W_t
    \end{equation*}
    with 
    \begin{equation*}
        A = \begin{pmatrix}0&1&0\\-m&0&\lambda\\0&-\lambda&-\gamma\end{pmatrix}\qquad\textup{and}\qquad\Sigma = \begin{pmatrix}0\\0\\\sqrt{2\gamma}\end{pmatrix}\,.
    \end{equation*}
    The asymptotic decay rate of the associated transition semigroup $(\hat P_t)_{t\geq0}$ is given by the spectral gap of its generator in $L^2(\hat\mu)$, which by \cite{metafune2002spectrum} coincides with the spectral gap
    \begin{equation*}
        \gap(A) = \inf \{-\Re(\lambda)\colon\lambda\in\spec(A)\}
    \end{equation*}
    of $A$.
    When fixing the scaling $\lambda = a\sqrt{m}$ and $\gamma = b\sqrt{m}$ for some $a,b>0$, an explicit computation shows that the spectrum of $A$ is given by
    \begin{align*}
            \mu_1 &= \sqrt{m}\left(-\frac{b}{3}-\frac{\beta}{{\left(\sqrt{\alpha^2+\beta^3}-\alpha\right)}^{1/3}}+\left(\sqrt{\alpha^2+\beta^3}-\alpha\right)^{1/3}\right)\,,\\
            \mu_2& = \sqrt{m}\left(-\frac{b}{3}+\frac{(1-\mathrm{i}\,\sqrt{3})\beta}{2{\left(\sqrt{\alpha^2+\beta^3}-\alpha\right)}^{1/3}}-\frac{(1+\mathrm{i}\,\sqrt{3}){\left(\sqrt{\alpha^2+\beta^3}-\alpha\right)}^{1/3}}{2}\right)\,,\\ 
            \mu_3& = \sqrt{m}\left(-\frac{b}{3}+\frac{(1+\mathrm{i}\,\sqrt{3})\beta}{2{\left(\sqrt{\alpha^2+\beta^3}-\alpha\right)}^{1/3}}-\frac{(1-\mathrm{i}\,\sqrt{3}){\left(\sqrt{\alpha^2+\beta^3}-\alpha\right)}^{1/3}}{2}\right)\,,
    \end{align*}
    where
    \begin{align*}
        \alpha &= \frac{b}{2}+\frac{b^3}{27}-\frac{b\left(a^2+1\right)}{6}\,,\\
        \beta &= \frac{a^2}{3}-\frac{b^2}{9}+\frac{1}{3}\,.
    \end{align*}
    The spectral gap is maximised when all three eigenvalues coincide, which is the case for $\alpha = \beta = 0$, or equivalently $a = 2\sqrt{2}$ and $b = 3\sqrt{3}$. Correspondingly, the choice $\lambda = 2\sqrt{2m}$ and $\gamma = 3\sqrt{3m}$ yields the optimal spectral gap $\sqrt{3m}$.

    \smallskip

    To obtain non-asymptotic bounds, the spectral gap does not suffice, and we consider the relaxation time.
    By \cite{Arnold2022Propagator}, the transition semigroup $(\hat P_t)_{t\geq0}$ satisfies
    \begin{equation*}
        \norm{\hat P_t}_{L_0^2(\hat\mu)\to L_0^2(\hat\mu)} = \norm{\exp(t\tilde A)}\qquad\textup{for all }t\geq0
    \end{equation*}
    with
    \begin{equation*}
        \tilde A = \begin{pmatrix}0&\sqrt{m}&0\\-\sqrt{m}&0&\lambda\\0&-\lambda&-\gamma\end{pmatrix}= \sqrt{m}\begin{pmatrix}0&1&0\\-1&0&2\sqrt{2}\\0&-2\sqrt{2}&-3\sqrt{3}\end{pmatrix}
    \end{equation*}
    with the choices of $\lambda$ and $\gamma$ given above.
    Since $\spec(\tilde A)=\{-\sqrt{3m}\}$ and $\dim\ker(\tilde A+\sqrt{3m}I)=1$, we obtain
    \begin{equation*}
        \exp(t\tilde A) = e^{-\sqrt{3m}t}S^{-1}\begin{pmatrix}1 & \sqrt{m}t & \frac{1}{2}mt^2 \\ 0 & 1 & \sqrt{m}t \\ 0 & 0 & 1\end{pmatrix}S\,.
    \end{equation*}
    with $S\in\mathrm{GL}(3,\R)$. An explicit calculation of the singular values then shows that 
    \begin{equation*}
        \norm{\hat P_t}_{L_0^2(\hat\mu)\to L_0^2(\hat\mu)} = e^{-\sqrt{3m}t}p(\sqrt{m}t)^{1/2}\quad\textup{with }p(s) = 1 + s^2+\frac{1}{8}s^4+\left(2s+\frac{1}{2}s^3\right)\sqrt{\frac{s^2}{16}+\frac{1}{2}}\,.
    \end{equation*}
    A numerical calculation yields the relaxation time
    \begin{equation*}
        t_\rel(\hat P) = \inf\{t\geq 0\colon \norm{\hat P_t}_{L_0^2(\hat\mu)\to L_0^2(\hat\mu)}\leq e^{-1}\}\approx0.964m^{-1/2}
    \end{equation*}
    of the generalised Langevin dynamics. By a factorisation argument, the same holds true for arbitrary Gaussian probability measures on $\R^d$. Comparing this to the lower bound $0.5m^{-1/2}$ from \Cref{thm:lowerboundGLE}, this shows that, in the terminology of \cite{eberle2024non}, on the class of quadratic potentials on $\R^d$, the generalised Langevin dynamics with optimal choice of $\lambda$ and $\gamma$ is a $1.93$-optimal lift of the corresponding overdamped Langevin diffusion. As a comparison, on the same class of potentials, Langevin dynamics \eqref{eq:ld} with optimal choice of friction $\gamma$ achieves a relaxation time of approximately $2.73m^{-1/2}$ and is thus only a $5.46$-optimal lift, see \cite[Example 14]{eberle2024non}.
    
\end{example}

\section*{Acknowledgements}
We would like to thank Andreas Eberle and Gabriel Stoltz for many helpful discussions. GB has received funding from the European Union Horizon 2020 research and innovation programme under the Marie Sklodowska-Curie grant agreement No 101034413. FL wurde gef\"ordert durch die Deutsche Forschungsgemeinschaft (DFG) im Rahmen der Exzellenzstrategie des Bundes und der L\"ander -- GZ2047/1, Projekt-ID 390685813. LW is supported by the National Science Foundation via grant DMS-2407166. He is also indebted to the Mathematical Sciences department at Carnegie Mellon University for partly supporting his visit to Europe in July 2024. Part of this work was completed when GB and LW were visiting the Institute for Applied Mathematics in Bonn. GB and LW would like to thank IAM for their hospitality. 
\bibliographystyle{plain}
\bibliography{main}
\end{document}